\setlist[enumerate]{leftmargin=1.2em} 
\setlist[itemize]{leftmargin=1.2em}
\definecolor{green}{rgb}{0,0.8,0} 
\newtheorem{theorem}{Theorem}[section]
\newtheorem{lemma}[theorem]{Lemma}
\newtheorem{conjecture}[theorem]{Conjecture}
\newtheorem{proposition}[theorem]{Proposition}
\theoremstyle{definition}
\newtheorem{example}[theorem]{Example}
\theoremstyle{remark}
\newtheorem{remark}[theorem]{Remark}
\numberwithin{equation}{section}
\newcommand{\nrm}[1]{\Vert#1\Vert}
\newcommand{\nnrm}[1]{{\vert\kern-0.25ex\vert\kern-0.25ex\vert #1 
		\vert\kern-0.25ex\vert\kern-0.25ex\vert}}
\newcommand{\sgn}{{\mathrm{sgn}}\,}
\newcommand{\supp}{{\mathrm{supp}}\,}
\renewcommand{\Re}{\mathrm{Re}}
\renewcommand{\Im}{\mathrm{Im}}
\newcommand{\lap}{\Delta}
\newcommand{\rd}{\partial}
\newcommand{\nb}{\nabla}
\newcommand{\alp}{\alpha}
\newcommand{\bt}{\beta}
\newcommand{\Gmm}{\Gamma}
\newcommand{\dlt}{\delta}
\newcommand{\eps}{\epsilon}
\newcommand{\lmb}{\lambda}
\newcommand{\tht}{\theta}
\newcommand{\omg}{\omega}
\newcommand{\varep}{\varepsilon}
\newcommand{\bfI}{{\bf I}}
\newcommand{\bfK}{{\bf K}}
\newcommand{\bbK}{\mathbb K}
\newcommand{\bbR}{\mathbb R}
\newcommand{\bbS}{\mathbb S}
\newcommand{\bbZ}{\mathbb Z}
\newcommand{\calD}{\mathcal D}
\newcommand{\calM}{\mathcal M}
\begin{document}
	
	\bibliographystyle{plain}
	\title{Logarithmic spirals in 2d perfect fluids}
	\author{In-Jee Jeong\thanks{Department of Mathematical Sciences and RIM, Seoul National University.   E-mail: injee\_j@snu.ac.kr}  \and Ayman R. Said\thanks{Department of Mathematics, Duke University.   E-mail: ayman.said@duke.edu} }
	
	\date\today
	
	\maketitle
	
	\renewcommand{\thefootnote}{\fnsymbol{footnote}}
	\footnotetext{\emph{2020 AMS Mathematics Subject Classification:} 76B47, 35Q35}
	\renewcommand{\thefootnote}{\arabic{footnote}}

	\begin{abstract} 
		We study logarithmic spiraling solutions to the 2d incompressible Euler equations which solve a nonlinear transport system on $\mathbb{S}$. We show that this system is locally well-posed in $L^p, p\geq 1$ as well as for atomic measures, that is logarithmic spiral vortex sheets. Logarithmic spiral vortex sheets introduced by Prandtl and Alexander are just particular examples in this well-posedness class which satisfy an additional scaling symmetry involving time. Moreover, we realize the dynamics of logarithmic vortex sheets as the well-defined limit of solutions which are smooth in the angle. Furthermore, our formulation not only allows for a simple proof of existence and bifurcation for non-symmetric multi-branched logarithmic spiral vortex sheets, but also provides a framework for studying asymptotic stability of self-similar dynamics. 

        For logarithmic spiraling solutions, we make an observation that the local circulation of the vorticity around the origin is a strictly monotone quantity of time, which allows for a rather complete characterization of the long-time behavior. We prove global well-posedness for bounded logarithmic spirals as well as data that admit at most logarithmic singularities. We are then able to show a dichotomy in the long time behavior, solutions either blow up (either in finite or infinite time) or completely homogenize. In particular, bounded logarithmic spirals should converge to constant steady states. For logarithmic spiral sheets, the dichotomy is shown to be even more drastic, where only finite time blow up or complete homogenization of the fluid can and does occur.
	\end{abstract}
	
	\section{Introduction} 
	
	\subsection{Logarithmic spirals}
	The vorticity equation for incompressible and inviscid fluids in $\bbR^2$ is given by  \begin{equation}\label{eq:2DEuler-vort}
		\left\{
		\begin{aligned}
			\rd_t\omg + u\cdot\nb \omg = 0, & \\
			u = \nb^\perp \lap^{-1}\omg , &
		\end{aligned}
		\right.
	\end{equation} where $\omg(t,\cdot):\bbR^2\to\bbR$ and $u(t,\cdot):\bbR^2\to\bbR^{2}$ denote the vorticity and velocity of the fluid, respectively. In this paper, we are concerned with solutions of \eqref{eq:2DEuler-vort} which are supported on \textit{logarithmic spirals}; in other words, vorticities $\omg$ which are \textit{invariant} under the following group of transformations of $\bbR^{2}$ parametrized by $\lmb>0$ \begin{equation*}
		\begin{split}
			(r,\tht) \mapsto (\lmb r, \tht + \beta\ln \lmb),
		\end{split}
	\end{equation*} for some nonzero real constant $\bt$. Here, $(r,\tht)$ denotes the polar coordinates in $\bbR^2$. Under the above invariance, we can take a periodic function $h(t,\cdot)$ of one variable such that 
	\begin{equation}\label{eq:vort-homog-spi}
		\begin{split}
			\omg(t,r,\tht) = h(t, \tht - \beta \ln r )
		\end{split}
	\end{equation} holds for all $t, r, \tht$. Then, the two-dimensional PDE \eqref{eq:2DEuler-vort} reduces to the  one-dimensional transport equation in terms of $h$: \begin{equation}\label{eq:vort-evo}
		\begin{split}
			\rd_t h + 2H \rd_\tht h = 0 
		\end{split}
	\end{equation} coupled with the elliptic problem \begin{equation}\label{eq:elliptic}
		\begin{split}
			4H-4\beta \partial_{\theta} H + (1+\beta^2)\partial^2_{\theta} H = h
		\end{split}
	\end{equation} defined on $\bbS = \bbR/(2\pi\bbZ)$. To derive \eqref{eq:vort-evo}--\eqref{eq:elliptic}, we take the following ansatz for the stream function 
	\begin{equation}\label{eq:stream-homog}
		\begin{split}
			\Psi(t,r,\tht) = r^2 H(t, \tht - \beta \ln r ),
		\end{split}
	\end{equation} and note that the corresponding velocity field $u = u^re^r+u^\tht e^\tht$ is given by \begin{equation}\label{eq:vel-homog}
		\left\{
		\begin{aligned}
			u^r(t,r,\tht) & = -r^{-1}\rd_\tht\Psi = - r\partial_{\theta} H(t, \tht - \beta \ln r ), \\
			u^\tht(t,r,\tht) & = \rd_r\Psi = 2rH(t, \tht - \beta \ln r ) - \bt r \partial_{\theta} H(t, \tht - \beta \ln r ),
		\end{aligned}
		\right.
	\end{equation} which gives that $u\cdot\nb\omg = 2H\rd_\tht h$. Furthermore, \eqref{eq:elliptic} follows from $\omg = \nb\times u = \frac1r ( \rd_r(ru^\tht) - \rd_\tht u^r )$ and \eqref{eq:vel-homog}. As we shall explain below, the special case $\bt=0$ corresponds to the system for 0-homogeneous vorticity studied in \cite{EJ1,EMS}. In this paper, we shall study dynamical properties of the system \eqref{eq:vort-evo}--\eqref{eq:elliptic} for $\bt\ne0$. 
	
	\subsection{Main results}
	
	Let us present the main results of this paper, which come in two categories: well-posedness issues and long-time dynamics. 
	
	\medskip
	
	\noindent \textbf{Well-posedness of logarithmic vortex.} To begin with, we have local and global existence and uniqueness of the system \eqref{eq:vort-evo}--\eqref{eq:elliptic} for $h\in L^p$ and $L^\infty$, respectively. 
	\begin{theorem}[Local well-posedness]\label{thm:wp-Lp}
		We have the following well-posedness results for the initial value problem of \eqref{eq:vort-evo}--\eqref{eq:elliptic}: if $h_0 \in L^{p}(\bbS)$ for some $1\le p \le \infty$, there exist $T\ge c\nrm{h_0}_{L^{1}}^{-1}$ and a unique local solution $h$ belonging to $C([0,T);L^p(\bbS))$ for $p<\infty$ and $C_*([0,T);L^\infty(\bbS))$ for $p=\infty$, $c>0$ is a universal constant. For any $p$, the $L^p$ solution blows up at $T^*$ if and only if \begin{equation*}
			\begin{split}
				\int_0^{T^*} \nrm{h(t,\cdot)}_{L^{1}} dt = \infty. 
			\end{split}
		\end{equation*} 
	\end{theorem}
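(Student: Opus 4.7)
My plan for Theorem~\ref{thm:wp-Lp} is to exploit that the system \eqref{eq:vort-evo}--\eqref{eq:elliptic} consists of a one-dimensional transport equation $\partial_t h + 2H\partial_\theta h = 0$ coupled with a constant-coefficient elliptic problem $\mathcal{L}H = h$ on $\mathbb{S}$, where $\mathcal{L}:=(1+\beta^2)\partial_\theta^2 - 4\beta\partial_\theta + 4$. Since the Fourier symbol $-(1+\beta^2)k^2 - 4i\beta k + 4$ is bounded away from zero on $\mathbb{Z}$, the operator $\mathcal{L}$ is invertible with an explicit Green's function on the circle, and $\mathcal{L}^{-1}$ is smoothing of order $-2$. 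The key consequence, which I would establish first by combining elliptic regularity with the one-dimensional Sobolev embedding $W^{1,1}(\mathbb{S})\hookrightarrow L^\infty(\mathbb{S})$, is the estimate
\begin{equation*}
\|H\|_{L^\infty(\mathbb{S})} + \|\partial_\theta H\|_{L^\infty(\mathbb{S})} \le C\|h\|_{L^1(\mathbb{S})},
\end{equation*}
which is the source of the critical role of the $L^1$ norm in the statement.

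Multiplying \eqref{eq:vort-evo} by $p|h|^{p-2}h$ and integrating by parts yields
\begin{equation*}
\frac{d}{dt}\|h\|_{L^p}^p = 2\int_{\mathbb{S}}(\partial_\theta H)\,|h|^p\,d\theta \le C\|h\|_{L^1}\|h\|_{L^p}^p.
\end{equation*}
Specializing to $p=1$ gives the Riccati-type inequality $\frac{d}{dt}\|h\|_{L^1}\le C\|h\|_{L^1}^2$, which closes on a time interval $[0,T]$ with $T\gtrsim \|h_0\|_{L^1}^{-1}$ on which $\|h\|_{L^1}$ at most doubles; feeding this back into the above inequality yields the general $L^p$ control. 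Existence would then be obtained by regularization: mollifying $h_0$ produces smooth data $h_0^\varepsilon$, the resulting smooth velocities $2H^\varepsilon$ allow construction of smooth solutions $h^\varepsilon$ by the classical method of characteristics, and the a priori bounds pass to the limit by compactness of the characteristic flows (equicontinuous and uniformly Lipschitz in $\theta$) together with weak-$\ast$ compactness of $h^\varepsilon$ in $L^p$; the $C_\ast$-continuity for $p=\infty$ is precisely this weak-$\ast$ limit.

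The hard step is \emph{uniqueness} at the rough $L^p$ level, where subtracting the equations for two solutions $h_1,h_2$ produces a term $\partial_\theta h_2$ that is only a distribution. My plan is to work in Lagrangian variables: writing $h_i = h_0\circ \Phi_{t,i}^{-1}$ for the flow $\Phi_{t,i}$ of the Lipschitz velocity $2H_i$, I would bound $\|h_1-h_2\|_{W^{-1,1}(\mathbb{S})}$ by $\|h_0\|_{L^1}\|\Phi_{t,1}-\Phi_{t,2}\|_{L^\infty}$ via duality against Lipschitz test functions, a Kantorovich--Rubinstein style estimate. Because $\mathcal{L}^{-1}$ is smoothing of order $2$, it maps $W^{-1,1}$ into $L^\infty$, yielding $\|H_1-H_2\|_{L^\infty}\lesssim \|h_0\|_{L^1}\|\Phi_{t,1}-\Phi_{t,2}\|_{L^\infty}$, and Gronwall on $\|\Phi_{t,1}-\Phi_{t,2}\|_{L^\infty}$ then forces the two flows, and hence the two solutions, to coincide. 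Finally, the stated blow-up criterion is immediate from the integrated form of the a priori estimate,
\begin{equation*}
\|h(t)\|_{L^p}^p \le \|h_0\|_{L^p}^p\exp\!\Bigl(Cp\!\int_0^t\|h(s)\|_{L^1}\,ds\Bigr),
\end{equation*}
since finiteness of $\int_0^{T^*}\|h(s)\|_{L^1}\,ds$ prevents $L^p$ blow-up and permits continuation past $T^*$.
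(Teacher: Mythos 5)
Your a priori estimates, the lower bound $T\gtrsim\|h_0\|_{L^1}^{-1}$ obtained by first closing the Riccati inequality at $p=1$, the existence by mollification and compactness, and the blow-up criterion all coincide with the paper's argument (your derivation of the time of existence from the $L^1$ norm is in fact slightly more explicit than the paper's). The divergence is in the uniqueness step, and there your plan has a genuine gap.

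The Kantorovich--Rubinstein duality you invoke requires the solution to be transported \emph{as a measure}, i.e. $h_i(t)\,d\theta=(\Phi_{t,i})_{\#}(h_0\,d\theta)$, which holds for the 2d vorticity equation because the velocity is divergence-free. Here \eqref{eq:vort-evo} is in non-divergence form and the one-dimensional velocity $2H$ is compressible ($\partial_\theta(2H)\not\equiv0$; indeed $\int h\,d\theta$ is not conserved, see Lemma \ref{lem:mon}). With $h_i=h_0\circ\Phi_{t,i}^{-1}$ the change of variables gives
\begin{equation*}
\int (h_1-h_2)\varphi\,d\theta=\int h_0(y)\bigl[\varphi(\Phi_1(y))\,\partial_y\Phi_1(y)-\varphi(\Phi_2(y))\,\partial_y\Phi_2(y)\bigr]\,dy,
\end{equation*}
so your claimed bound $\lesssim \mathrm{Lip}(\varphi)\,\|h_0\|_{L^1}\|\Phi_1-\Phi_2\|_{L^\infty}$ misses the term $\|\varphi\|_{L^\infty}\|h_0\|_{L^1}\|\partial_y\Phi_1-\partial_y\Phi_2\|_{L^\infty}$. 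Gronwalling the Jacobian difference forces you to control $\|H_1'-H_2'\|_{L^\infty}$ by the weak norm of $h_1-h_2$, which would require $\mathcal{L}^{-1}$ to map $W^{-1,1}$ into $W^{1,\infty}$ --- a three-derivative gain after embedding, which is false (you correctly only get $W^{-1,1}\to W^{1,1}\hookrightarrow L^\infty$); it also requires a Lipschitz modulus for $H_1'$, which is unavailable when $h_1$ is merely $L^1$ (then $H_1'\in W^{1,1}$ only). So the loop does not close at the stated regularity.

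The paper avoids the Lagrangian picture entirely: it derives the closed evolution equation \eqref{eq:H} for $H$ itself, sets $D=H-\tilde H$, and runs an $L^2$ energy estimate on \eqref{eq:D}. The key point is that the dangerous nonlocal term $\bfK(8\bt-3(1+\bt^2)\rd_\tht)[(H'+\tilde H')D']$ can be integrated by parts in the convolution variable so that the derivative lands on the bounded kernel rather than on $D$, yielding $\|\bfK\rd_\tht(gD')\|_{L^2}\le C\|g\|_{L^\infty}\|D\|_{L^2}$ and hence a Gronwall inequality for $\|D\|_{L^2}^2$ with constant $C(\|h\|_{L^1}+\|\tilde h\|_{L^1})$. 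If you want to keep a flow-based proof you would need to first renormalize by the Jacobian (i.e. pass to the associated continuity equation) and then relate $K*h_i$ to $K*\mu_i$, which reintroduces exactly the commutator the paper handles by its integration by parts; working at the level of $H$ is the cleaner route.
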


    \begin{remark}
        Higher regularity (e.g. $C^{k,\alp}, W^{k,p}$ for $k\ge0$) of $h$ propagates in time as long as $\nrm{h(t,\cdot)}_{L^{1}}$ remains bounded. 
    \end{remark}
	
	In the case of bounded, or even logarithmically singular data, we have global regularity: \begin{theorem}[Global well-posedness]\label{thm:gwp}
		The local solution in Theorem \ref{thm:wp-Lp} is global in time for initial data $h_0 \in \cap_{p<\infty} L^p$ satisfying \begin{equation}\label{eq:Lp-log}
			\begin{split}
				\sup_{p\ge 1} \frac{\nrm{h_0}_{L^p}}{p} < + \infty . 
			\end{split}
		\end{equation} In particular, $L^\infty$ solutions are global in time. 
	\end{theorem}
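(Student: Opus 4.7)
The plan is to invoke the blow-up criterion of Theorem \ref{thm:wp-Lp}, which reduces global existence to showing
\[
Y(T) := \int_0^T \|h(t,\cdot)\|_{L^1}\, dt \;<\; \infty
\]
for every $T>0$. First I would derive an $L^p$ energy identity for \eqref{eq:vort-evo}: multiplying by $p|h|^{p-2}h$ and integrating by parts on $\bbS$ gives
\begin{equation*}
\frac{d}{dt}\|h(t,\cdot)\|_{L^p}^p \;=\; 2\int_{\bbS} \rd_\tht H\,|h|^p\, d\tht \;\le\; 2\|\rd_\tht H\|_{L^\infty}\|h\|_{L^p}^p .
\end{equation*}
Next, the characteristic exponents of \eqref{eq:elliptic} are $2(\bt\pm i)/(1+\bt^2)$, genuinely complex with nonzero real part whenever $\bt\ne 0$, so the operator in \eqref{eq:elliptic} is invertible on $L^p(\bbS)$ for each $p$ with a piecewise smooth periodic Green's function whose $\tht$-derivative is bounded. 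This yields the uniform elliptic estimate $\|\rd_\tht H\|_{L^\infty(\bbS)}\le C_\bt\|h\|_{L^1(\bbS)}$. Combining the two inequalities and integrating in time produces the master bound
\begin{equation*}
\|h(t,\cdot)\|_{L^p}\;\le\;\|h_0\|_{L^p}\exp\!\Big(\tfrac{2 C_\bt}{p}\,Y(t)\Big),\qquad 1\le p\le\infty.
\end{equation*}

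The $L^\infty$ case is now immediate: sending $p\to\infty$ in the master bound (or equivalently reading the conservation $\|h(t)\|_{L^\infty}=\|h_0\|_{L^\infty}$ directly from the transport structure of \eqref{eq:vort-evo}) gives $Y(T)\le 2\pi T\|h_0\|_{L^\infty}$, so the blow-up criterion is never triggered. For the genuinely log-singular case I would combine the master bound with H\"older's inequality $\|h(t)\|_{L^1}\le (2\pi)^{1-1/p}\|h(t)\|_{L^p}$ and the hypothesis $\|h_0\|_{L^p}\le Ap$ coming from \eqref{eq:Lp-log} to produce, for every $p\ge 1$,
\begin{equation*}
Y'(t)\;=\;\|h(t)\|_{L^1}\;\le\;2\pi A\,p\,\exp\!\Big(\tfrac{2C_\bt}{p} Y(t)\Big).
\end{equation*}

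The heart of the argument is the optimization in $p$: the map $p\mapsto p\,e^{a/p}$ attains its minimum $ea$ at $p=a$ (valid for $a\ge 1$), so the choice $p(t)=\max\{1,2C_\bt Y(t)\}$ produces a differential inequality $Y'(t)\le D(1+Y(t))$ for a constant $D=D(\bt,A)$, whence Gr\"onwall yields $Y(t)\le e^{Dt}-1<\infty$ for all $t\ge 0$. The $L^1$ blow-up criterion is therefore never activated, global existence in $L^p$ for every $p<\infty$ follows, and higher regularity propagates a posteriori through the master bound (as indicated by the remark following Theorem \ref{thm:wp-Lp}). I expect the optimization step to be the main obstacle: the linear growth $\|h_0\|_{L^p}\aleq p$ from \eqref{eq:Lp-log} is exactly what compensates the $1/p$ in the exponential of the master bound, and this is the quantitative reason the log-singular threshold is sharp for this scheme — a weaker growth like $\|h_0\|_{L^p}\aleq p^{1+\veps}$ would no longer close the Gr\"onwall loop.
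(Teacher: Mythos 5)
Your proposal is correct and follows essentially the same route as the paper: the $L^p$ energy estimate, the bound $\nrm{H'}_{L^\infty}\le C\nrm{h}_{L^1}$, Gr\"onwall to get $\nrm{h(t)}_{L^p}\le \nrm{h_0}_{L^p}\exp\bigl(\tfrac{2C}{p}\int_0^t\nrm{h}_{L^1}\bigr)$, and then the time-dependent choice $p(t)\sim \int_0^t\nrm{h}_{L^1}$ to close a linear differential inequality for the $L^1$ norm. The paper phrases the key step as picking $p$ large enough that the exponential factor is at most $2$, while you phrase it as minimizing $p\,e^{a/p}$ at $p=a$; these are the same idea.
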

	
	\begin{remark} The above can be extended to data satisfying $\nrm{h_0}_{L^p} \lesssim p\ln (10+p)$, $\lesssim p \ln (10+p) \ln(10+ \ln(10+p))$, and so on. \end{remark}

	\begin{figure}
		\centering
		\includegraphics[scale=0.4]{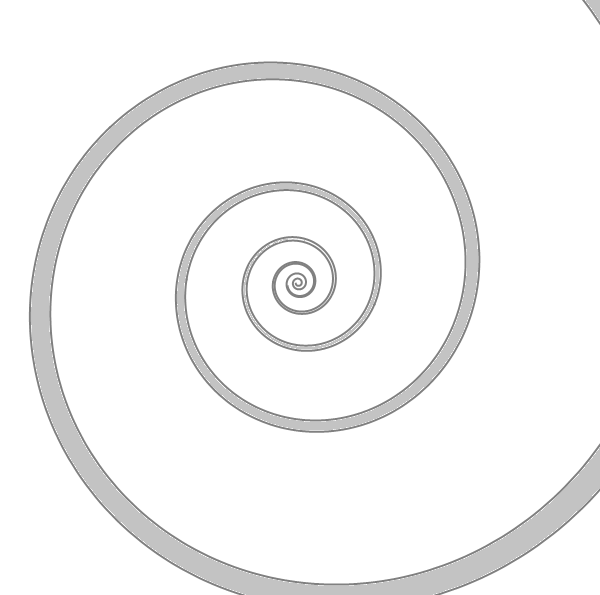}  \,\, 		\includegraphics[scale=0.4]{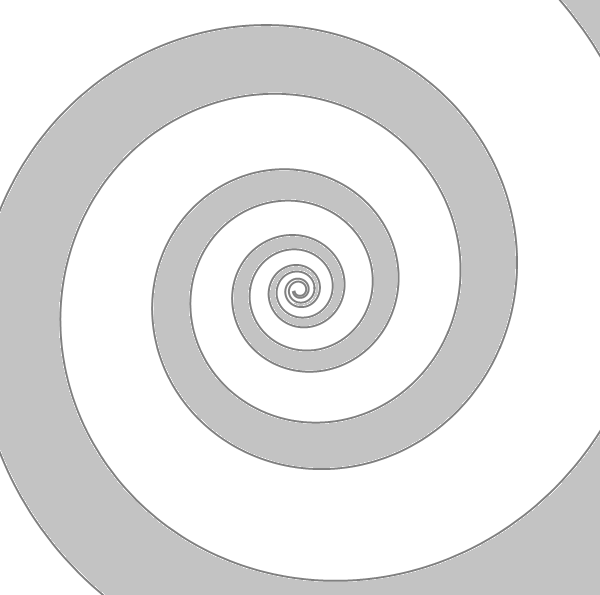}  \,\, 		\includegraphics[scale=0.4]{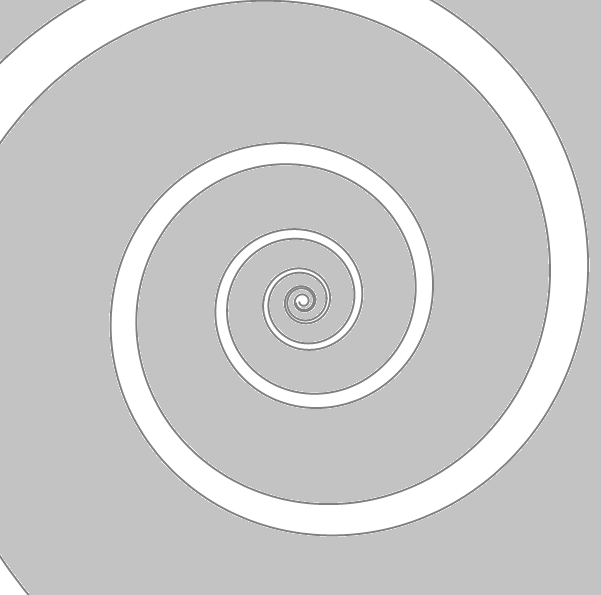}  
		\caption{Evolution of the vortex patch supported between two logarithmic spirals (gray region).} \label{fig:logspiral}
	\end{figure}
	
	\medskip
	
	Next, we consider the space $\calD(\bbS)$ of atomic measures on $\bbS$, i.e. $g \in \calD$ has a representation $g=\sum_{ j\ge0} I_j \dlt_{\tht_j}$ with $\sum_{j\ge0} |I_j|<\infty$. Assuming that $\tht_j$ are distinct, we set $\nrm{g}_{\calD} := \sum_{j\ge0} |I_j|$. As we shall discuss in more detail later, the following result gives a well-posedness for $h$ which are atomic measures, which corresponds to \textit{logarithmic vortex sheets} in $\bbR^{2}$. 
	
	\begin{theorem}[Logarithmic vortex sheets]\label{thm:wp-Dirac}
		Let $h_0 \in \calD(\bbS)$ satisfy $h_0= \sum_{ j\ge0} I_{j,0} \dlt_{\tht_{j,0}}$. Take some compactly supported family of mollifiers $\varphi^{\varep}(\tht)=\frac{1}{\varep}\varphi(\frac{\tht}{\varep})$. Then, there exists some $T>0$ such that for the sequence of mollified initial data $h_0^{\varep} := \varphi^\varep * h_0$, the corresponding unique global solutions $\left\{ h^{\varep}(t,\cdot)\right\}_{\varep>0} $ converge in the sense of measures to \begin{equation*}
			\begin{split}
				h(t,\cdot) = \sum_{ j\ge0} I_j(t) \dlt_{\tht_j(t)}
			\end{split}
		\end{equation*} for all $t\in[0,T)$. Here, $\left\{ I_j, \tht_j \right\}_{j\ge0 }$ is the unique local solution to the ODE system \begin{equation}\label{eq:ODE}
			\left\{
			\begin{aligned}
				\dot{I}_j(t) & = 2 \partial_{\theta} H(t,\tht_j(t)) I_j(t), \\
				\dot{\tht}_j(t) & = 2H(t,\tht_j(t)),
			\end{aligned}
			\right.
		\end{equation} with initial data $I_j(0)=I_{j,0}$ and $\tht_j(0)=\tht_{j,0}$, where $H$ is the unique Lipschitz solution to \eqref{eq:elliptic}. Specifically, we have that \begin{equation}\label{eq:Dirac-H-explicit}
			\begin{split}
				H(t,\tht_{j}(t)) = \sum_{\ell\ge0} I_{\ell}(t) K(\tht_{j}(t) - \tht_{\ell}(t) )
			\end{split}
		\end{equation} and \begin{equation}\label{eq:Dirac-Hprime-explicit}
			\begin{split}
				\partial_{\theta} H(t,\tht_j(t)) = \sum_{\ell\ge0} I_{\ell}(t) K'(\tht_{j}(t) - \tht_{\ell}(t) ), \qquad K'(0) := \lim_{\eps \to 0} \frac{K'(\eps) + K'(-\eps)}{2}
			\end{split}
		\end{equation} with $K$ being the fundamental solution to \eqref{eq:elliptic} given explicitly in \eqref{eq:cln K}. In this sense, the time-dependent Dirac measure $h(t,\cdot)$ is the unique solution to \eqref{eq:vort-evo}--\eqref{eq:elliptic} with initial data $h_0$ in $[0,T)$. 
	\end{theorem}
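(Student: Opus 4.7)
The plan is to reduce the theorem to three ingredients: local well-posedness of the ODE system \eqref{eq:ODE}; uniform-in-$\varep$ local existence of the mollified solutions $h^\varep$; and identification of the weak-$*$ limit of $h^\varep$ as $\sum_j I_j(t)\dlt_{\tht_j(t)}$.

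For the ODE, I would write $H(t,\tht) = \sum_\ell I_\ell(t) K(\tht - \tht_\ell(t))$ using the explicit Green's function $K$ of \eqref{eq:elliptic}. Since the characteristic polynomial $4 - 4\bt z + (1+\bt^2)z^2$ has two complex roots, $K$ is continuous on $\bbS$ and smooth away from the origin with a unit jump in $K'$ at $0$, so the symmetrized value $K'(0)$ makes \eqref{eq:Dirac-Hprime-explicit} well-defined whenever the $\tht_j$ are pairwise distinct. The RHS of \eqref{eq:ODE} is then locally Lipschitz in the $\ell^1$ norm $\sum_j (|I_j|+|\tht_j|)$ on the region of positive pairwise separations, and the nonlinear estimate $|\dot I_j| \le 2 \|K'\|_{L^\infty} N\, |I_j|$ with $N(t):=\sum_j |I_j(t)|$ yields $\dot N\le CN^2$, so there is local existence on some $[0,T_*]$ with $T_*\gtrsim \|h_0\|_{\calD}^{-1}$. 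During this interval the Lipschitz-in-space estimate $|H(t,\tht)-H(t,\tht')|\le CN|\tht-\tht'|$ combined with Gronwall shows that distinct initial positions evolve into distinct positions, and Picard--Lindel\"of yields uniqueness. For the mollified PDE, $\|h_0^\varep\|_{L^1}\le\|h_0\|_{\calD}$, so Theorem \ref{thm:wp-Lp} together with the a priori bound $\tfrac{d}{dt}\|h^\varep\|_{L^1}\lesssim \|h^\varep\|_{L^1}^2$ gives a uniform-in-$\varep$ $L^1$ bound for $h^\varep$ on $[0,T_*]$ (after possibly shrinking $T_*$).

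For the identification step I would use the characteristic representation: let $\Phi^\varep_t$ be the flow of $2H^\varep$ and $J^\varep(t,s) := \partial_s\Phi^\varep_t(s)$ its Jacobian, so that $h^\varep(t, \Phi^\varep_t(\cdot)) = h_0^\varep$. For small $\varep$ the mollified data splits as $h_0^\varep = \sum_j h_{0,j}^\varep$ with $\supp h_{0,j}^\varep \subset A_j^\varep := [\tht_{j,0}-\varep,\tht_{j,0}+\varep]$ and $\int h_{0,j}^\varep = I_{j,0}$; transport gives $\supp h^\varep(t) = \bigsqcup_j \Phi^\varep_t(A_j^\varep)$ with each piece of length $\le \varep\, \|J^\varep\|_{L^\infty} \lesssim \varep$, carrying mass $\int_{A_j^\varep} h_0^\varep(s) J^\varep(t,s)\,ds$. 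To pass to the limit I would extract a weak-$*$ limit of $h^\varep$ in $C([0,T_*]; \calM(\bbS))$ via Banach--Alaoglu together with the equicontinuity $|\langle h^\varep(t)-h^\varep(s),\phi\rangle|\lesssim|t-s|\|\phi\|_{W^{1,\infty}}$ that follows from writing $\partial_t h^\varep = -\partial_\tht(2H^\varep h^\varep)+2\partial_\tht H^\varep h^\varep$ and using the uniform $L^1$ bounds. Then I would show the pointwise limits $\Phi^\varep_t(\tht_{j,0})\to\tht_j(t)$ and $J^\varep(t,\tht_{j,0})\to I_j(t)/I_{j,0}$ by comparing their ODEs to \eqref{eq:ODE}: the weak-$*$ convergence of $h^\varep$ gives convergence of $H^\varep$ and $\partial_\tht H^\varep$ along the trajectories to the atomic formulas \eqref{eq:Dirac-H-explicit}--\eqref{eq:Dirac-Hprime-explicit}, and Gronwall closes the loop. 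Each piece of the support then shrinks to $\tht_j(t)$ carrying mass $I_j(t)$, so the full limit is $\sum_j I_j(t)\dlt_{\tht_j(t)}$; uniqueness of \eqref{eq:ODE} promotes subsequential convergence to convergence of the whole family.

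The principal obstacle is making the convergence $\partial_\tht H^\varep(\Phi^\varep_t(\tht_{j,0})) \to \partial_\tht H(t,\tht_j(t))$ legitimate despite the jump of $K'$ at the origin. The contribution from the $j$-th mollified packet to $\partial_\tht H^\varep$ at the center of the packet is an integral of $K'$ against a near-$\dlt$ concentrating \emph{exactly} at the jump point; it converges to $I_j(t) K'(0) = I_j(t)(K'(0+)+K'(0-))/2$ precisely because the mollifier $\varphi$ is even (a substitution $u=J^\varep(t,\tht_{j,0})(s-\tht_{j,0})$ reduces the issue to the classical symmetric-limit convolution). Quantifying this convergence uniformly across atoms and avoiding near-collisions when the $\tht_{j,0}$ cluster (in infinite-atom configurations) is the main technical difficulty; the remaining pieces---infinite-sum summability and compactness---are handled by routine Gronwall estimates in the $\ell^1$ or $L^1$ norm.
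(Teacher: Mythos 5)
Your overall architecture (uniform $L^1$ bounds from Theorem \ref{thm:wp-Lp}, decomposition of $h^\varep$ into packets of width $O(\varep)$, approximate ODEs for the packet positions and masses, Gronwall, and uniqueness of \eqref{eq:ODE} to upgrade subsequential convergence) matches the paper's proof. The divergence, and the gap, is in the one step you correctly single out as the crux: the self-interaction of a packet through the discontinuous kernel $K'$. You propose to resolve it by evenness of the mollifier plus the substitution $u=J^\varep(t,\tht_{j,0})(s-\tht_{j,0})$, reducing to a ``classical symmetric-limit convolution.'' This fails for two reasons. First, evenness of $\varphi$ is not assumed in the statement. Second, and more seriously, the flow does not act affinely on the packet, so the symmetry of $\varphi$ is destroyed at positive times: since $\rd_\tht^2 H^\varep = \frac{1}{1+\bt^2}\bigl(h^\varep - 4H^\varep+4\bt \rd_\tht H^\varep\bigr)$ and $h^\varep\sim \varep^{-1}$ on a set of width $O(\varep)$, the quantity $\rd_s \ln J^\varep(t,s)$ is of size $\varep^{-1}$ on the packet, so $\ln J^\varep(t,\cdot)$ varies by $O(1)$ (not $o(1)$) across $A_j^\varep$. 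Consequently the transported packet is genuinely asymmetric about $\Phi^\varep_t(\tht_{j,0})$, and moreover the packet mass $\int_{A_j^\varep} h_0^\varep(s)J^\varep(t,s)\,ds$ is \emph{not} asymptotic to $I_{j,0}\,J^\varep(t,\tht_{j,0})$; tracking the pointwise Jacobian at the center cannot identify the limiting weights $I_j(t)$.

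The paper's mechanism is different and requires no symmetry of the mollifier or of the evolved packet. One works directly with $\frac{d}{dt}I_j^\varep(t)=\int_{A_j(t)}2\rd_\tht H^\varep\,h^\varep\,d\tht$, whose self-interaction part is the \emph{double} integral
\begin{equation*}
\iint_{\bbS\times\bbS} 2K'(\tht-\tht')\,h^\varep_j(\tht)\,h^\varep_j(\tht')\,d\tht\,d\tht',
\end{equation*}
and splits $K=K^e+K^o$ into even and odd parts about $\tht=0$. The jump of $K'$ lives entirely in $(K^e)'$, which is an \emph{odd} function, so its pairing against the symmetric measure $h^\varep_j\otimes h^\varep_j$ vanishes identically for any packet shape; what remains is $(K^o)'$, which is Lipschitz (indeed $K^o\in W^{2,\infty}$), and can be evaluated at $\tht^\varep_j(t)$ with $O(\varep)$ error, producing exactly the symmetrized value $K'(0)=\lim_{\eps\to0}\frac{K'(\eps)+K'(-\eps)}{2}$ in the limit. (Equivalently: the jump part contributes $\frac{K'(0^+)+K'(0^-)}{2}(I_j^\varep)^2$ exactly, because $\iint_{\tht>\tht'}h_j^\varep h_j^\varep=\frac12 (I_j^\varep)^2$ regardless of the mass distribution.) If you replace your pointwise-Jacobian/even-mollifier step by this antisymmetry cancellation in the double integral, the rest of your outline goes through and coincides with the paper's argument.
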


        Let us briefly comment on the statements: to begin with, the function $\partial_{\theta} H$ (which is the derivative of the solution $H$ to \eqref{eq:elliptic}) is actually not uniformly Lipschitz in $\bbS$. However, $\partial_{\theta} H$ in this case (meaning that $h$ in the right hand side of \eqref{eq:elliptic} is a linear combination of Dirac deltas) is infinitely smooth away from the support of the Dirac deltas, and in particular, one can prove existence and uniqueness of the solution to \eqref{eq:ODE} as long as none of the points $\tht_{j}$ collide with each other. (The situation is completely analogous to the case of point vortices evolving in a two-dimensional domain.) 
        
        Furthermore, we emphasize that the above convergence statement does not follow directly from some norm estimate (the main problem being $K \notin C^1(\bbS)$; this is why $K'(0)$ needs to be defined as the limit in \eqref{eq:Dirac-Hprime-explicit}) but instead one needs to use a crucial cancellation coming from the fact that the non-continuous part of $K'$ is \textit{odd}. In a sense, this proof is similar (although simpler) to the proof that the sequence of de-singularized vortex patch solutions to the 2D Euler equation converges to the solution of the point vortex system in the singular limit (\cite{MP}). In the statement, we could have regularized each Dirac delta by either a patch or some other $L^\infty$ functions as well. 
 
	\begin{remark} One may extend the above statement to get local existence in the broader class $\calM := \calD+L^1$. 
	\end{remark}

	The following proposition verifies that the solutions to the one-dimensional system \eqref{eq:vort-evo}--\eqref{eq:elliptic} obtained in such a class give rise to actual weak solutions to the two-dimensional Euler equations. 
	\begin{proposition}\label{prop:back-to-2D}
		Let $h \in C_*([0,T);\calM(\bbS))$ be a weak solution of \eqref{eq:vort-evo} with initial data $h_0$. Then, $(\omg, u)$ defined via \eqref{eq:vort-homog-spi}--\eqref{eq:vel-homog} provides an integral weak solution, in velocity form, to 2D Euler equations \eqref{eq:2DEuler-vort} with initial data $\omg_0$. For $h \in C([0,T);L^1(\bbS))$, we get moreover that $(\omg, u)$ is a weak solution in vorticity form.
	\end{proposition}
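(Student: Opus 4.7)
The plan is to reduce the 2D weak identities on $\bbR^2$ to the 1D weak form of \eqref{eq:vort-evo} on $\bbS$ by changing to log-spiral coordinates and integrating out $r$. As a preliminary, I record the regularity: since the elliptic operator in \eqref{eq:elliptic} smooths by two derivatives, the solution $H$ associated to $h \in C_*([0,T);\calM(\bbS))$ is uniformly Lipschitz on $\bbS$, so $u$ defined by \eqref{eq:vel-homog} is continuous on $\bbR^2$ with linear growth $|u(t,x)| \lesssim |x|$. In particular $u \otimes u \in L^1_{\mathrm{loc}}$, so the velocity-form identity is well-defined. When additionally $h \in L^1(\bbS)$, Fubini gives $\omg \in L^1_{\mathrm{loc}}(\bbR^2)$ and hence $\omg u \in L^1_{\mathrm{loc}}$, so the vorticity-form identity is also meaningful. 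The divergence-free condition $\mathrm{div}\,u = 0$ is automatic from $u = \nb^\perp(r^2 H)$.

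\textbf{Reduction.} Let $\varphi \in C_c^\infty([0,T)\times\bbR^2;\bbR^2)$ be divergence-free. By simple connectedness of $\bbR^2$ together with compact support, I write $\varphi = \nb^\perp \eta$ for some $\eta \in C_c^\infty([0,T)\times\bbR^2)$. Two integrations by parts using $u = \nb^\perp \Psi$, $\omg = \lap \Psi$, and the identity $\mathrm{div}(u\otimes u) = \nb|u|^2/2 + \omg u^\perp$ convert the velocity-form identity for $\varphi$ into
\[
\int_0^T\!\!\int_{\bbR^2} \omg\, \rd_t \eta\, dx\, dt + \int_0^T\!\!\int_{\bbR^2} \omg u \cdot \nb \eta\, dx\, dt + \int_{\bbR^2} \omg_0\, \eta(0,\cdot)\, dx = 0,
\]
where, when $h \in \calM$, the middle term is read via the $u \otimes u$ pairing rather than as a literal product $\omg u$. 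Writing polar coordinates $(r,\theta)$ and setting $\tilde\eta(t, r, \tht) := \eta(t, r\cos(\tht + \beta\ln r), r\sin(\tht + \beta\ln r))$, a direct computation from \eqref{eq:vel-homog} gives $u\cdot\nb \eta = -r(\rd_\tht H)\,\rd_r \tilde\eta + 2H\,\rd_\tht \tilde\eta$. Defining $\chi(t,\tht) := \int_0^\infty r\, \tilde\eta(t,r,\tht)\, dr \in C_c^\infty([0,T)\times\bbS)$ and integrating by parts in $r$ via $\int_0^\infty r^2\, \rd_r \tilde\eta\, dr = -2\chi$ (boundary terms vanish by compact support), the displayed identity becomes
\[
\int_0^T\!\!\int_{\bbS} \big[h\,\rd_t \chi + 2 h H\, \rd_\tht \chi + 2 h\,(\rd_\tht H)\,\chi\big]\, d\tht\, dt + \int_{\bbS} h_0\, \chi(0,\cdot)\, d\tht = 0.
\]
This is exactly the weak form of \eqref{eq:vort-evo}, rewritten in conservation-plus-source form $\rd_t h + \rd_\tht(2Hh) - 2h\,\rd_\tht H = 0$, and it holds by hypothesis. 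The vorticity-form assertion for $h \in L^1$ is then obtained by running the same chain of identities against an arbitrary scalar $\eta \in C_c^\infty$ (not necessarily a $\nb^\perp$-potential), which is licensed precisely because $\omg u \in L^1_{\mathrm{loc}}$.

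\textbf{Main obstacle.} The delicate step is justifying the reduction at the measure-valued level: the pairing $\int h\,\rd_\tht H\,\chi$ is classically ambiguous when $h$ has Dirac masses at the same points where $\rd_\tht H$ jumps, and the identity relating $u\otimes u$ to $\omg u \cdot \nb \eta$ must be interpreted distributionally. I handle both by approximation: for the mollifications $h^\veps = \varphi^\veps * h$ every step of the calculation is classical and the associated $(\omg^\veps, u^\veps)$ is a genuine smooth solution of \eqref{eq:2DEuler-vort} by construction together with Theorem~\ref{thm:wp-Lp}. Passing to the limit $\veps \to 0$, I use $H^\veps \to H$ uniformly on $\bbS$, $u^\veps \to u$ uniformly on compact subsets of $\bbR^2$, $u^\veps \otimes u^\veps \to u \otimes u$ in $L^1_{\mathrm{loc}}$, and the weak convergence $h^\veps \rightharpoonup h$ in $\calM$. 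Equivalently, the pairing $h\,\rd_\tht H$ is read via the symmetric-value convention \eqref{eq:Dirac-Hprime-explicit}, which is exactly the interpretation intrinsic to the weak formulation of \eqref{eq:vort-evo} for $h \in \calM$ provided by the framework of Theorem~\ref{thm:wp-Dirac}. Once these identifications are in place, the remaining calculations are routine.
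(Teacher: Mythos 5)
Your proof is essentially correct in outline, but it takes a genuinely different route from the paper, and the difference is worth spelling out. The paper splits the two claims: for $h\in L^{1}$ it verifies the vorticity equation directly in polar coordinates, excising a small ball around the origin and passing to the limit using uniform boundedness of $\omg u$; for $h\in\calM$ it works with the pressure-inclusive weak velocity formulation, makes the ansatz $p=r^{2}P(t,\tht-\bt\ln r)$, computes $P$ explicitly from \eqref{eq:H}--\eqref{eq:H-prime}, and observes the crucial cancellation of the term $(1+\bt^{2})\bt H'H''$ --- the one product that is genuinely ill-defined for sheet data --- so that $P\in L^{\infty}$ and the mollified identities pass to the limit. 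You instead test only against divergence-free fields $\varphi=\nb^{\perp}\eta$, which eliminates the pressure altogether, and then integrate out the radial variable to land exactly on the weak form of \eqref{eq:vort-evo} paired with $\chi(t,\tht)=\int_{0}^{\infty}r\,\tilde\eta\,dr$; your radial integration by parts also absorbs the origin without any excision. Your computation of $u\cdot\nb\eta=-rH'\rd_{r}\tilde\eta+2H\rd_{\tht}\tilde\eta$ and the resulting identity are correct, and the final approximation step (solving from mollified data and invoking the convergence of Section \ref{sec:cvg moll}, not mollifying the solution itself --- your notation $h^{\veps}=\varphi^{\veps}*h$ should be read that way) is the same device the paper uses. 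What your route buys is that the only singular product you must interpret is $h\,\rd_{\tht}H$, which is already given meaning by \eqref{eq:Dirac-Hprime-explicit}; what it loses is the pressure. The paper's notion of integral weak solution in velocity form, \eqref{eq: weak euler}, tests against \emph{arbitrary} $(\phi^{r},\phi^{\tht})$ and exhibits $p$ as an actual $L^{\infty}_{\mathrm{loc}}$ function; your divergence-free formulation is standard and equivalent up to recovering $p$ by de Rham, but as written it does not produce the pressure, and the cancellation that makes $P$ well-defined is the real content of the paper's argument --- you should either adopt the pressure-free definition explicitly or supply that computation. Two smaller slips: for $h\in\calM$ with atoms the velocity is \emph{not} continuous on $\bbR^{2}$ (the tangential component jumps across the sheet, since $H'$ is only $L^{\infty}$), though $|u|\lesssim|x|$ still gives $u\otimes u\in L^{1}_{\mathrm{loc}}$; and for the same reason $u^{\veps}\to u$ only in $L^{p}_{\mathrm{loc}}$, $p<\infty$, not locally uniformly, which is still enough for your limit passage.
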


	\medskip
	
	\noindent \textbf{Long time dynamics and singularity formation.} Given the above local well-posedness results, it is natural to study the long-time dynamics with initial data either in $L^p$ or in $\calD$. It turns out that there is a \textit{monotone} quantity for solutions to \eqref{eq:vort-homog-spi}--\eqref{eq:vel-homog}, which is nothing but the local circulation: \begin{equation}\label{eq:entropy spirals}
		\begin{split}
			\Gmm(R) := \iint_{|x|\le R} \omg(t,x) dx = \frac{R^2}{2} \int h(t,\tht)d\tht.  
		\end{split}
	\end{equation} Unless $\omg$ is a constant, we have that $\Gmm(R)$ is \textit{strictly decreasing} (resp. increasing) for $\bt>0$ (resp. $\bt<0$), see Lemma \ref{lem:mon}. This is in stark contrast with the case of $0$-homogeneous vorticity studied in \cite{EJ,EMS} and one can see from the proof the specific nature of logarithmic spirals is reflected in the evolution of $\Gmm$. As an immediately corollary, we obtain that the only steady states to \eqref{eq:vort-homog-spi}--\eqref{eq:vel-homog} in $\calM$ are constants. This provides the basis for obtaining long time dynamics of solutions. 
	
	\medskip  
	
	In the case of bounded data, we can show long-time convergence to a constant steady state: 
	\begin{theorem}[Convergence for bounded data] \label{thm:conv}
		For $h_0 \in L^\infty(\bbS)$, there exist constants $\bfI_{\pm} = \bfI_{\pm}(h_0)$ satisfying $|\bfI_{\pm}|\le \nrm{h_0}_{L^\infty}$ such that the global-in-time solution $h(t,\cdot)$ corresponding to $h_0$ satisfies \begin{equation*}
			\begin{split}
				h(t,\cdot) \longrightarrow \bfI_{\pm},\qquad t \to \pm\infty		\end{split}
		\end{equation*} in $H^{-a}(\bbS)$ for any $a>0$. 
	\end{theorem}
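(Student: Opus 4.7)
My approach is a LaSalle-type invariance argument built on the strict monotonicity of the mean supplied by Lemma \ref{lem:mon}. Write $M(t) := \tfrac{1}{2\pi}\int_\bbS h(t,\tht)\,d\tht$, which up to a constant equals the local circulation in \eqref{eq:entropy spirals}, and is therefore strictly monotone in $t$ unless $h(t,\cdot)$ is constant. Since the transport equation \eqref{eq:vort-evo} has a Lipschitz ``velocity'' $2H$ (by elliptic regularity for \eqref{eq:elliptic} applied to $h\in L^\infty$), the $L^\infty$ norm of $h$ is preserved along characteristics, so $\nrm{h(t)}_{L^\infty}=\nrm{h_0}_{L^\infty}$ and hence $|M(t)|\le \nrm{h_0}_{L^\infty}$. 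Monotonicity together with boundedness gives limits $\bfI_\pm := \lim_{t\to\pm\infty} M(t)$ satisfying $|\bfI_\pm|\le \nrm{h_0}_{L^\infty}$, which is the first claim of the theorem.

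To upgrade convergence of the mean to $H^{-a}$ convergence, I first note that $\{h(t)\}_{t\ge 0}$ is uniformly bounded in $L^\infty(\bbS)\subset L^2(\bbS)$ and hence precompact in $H^{-a}(\bbS)$ via the compact embedding $L^2\hookrightarrow H^{-a}$, so it is enough to show that the $\omega$-limit set of $h_0$ in $H^{-a}$ reduces to $\{\bfI_+\}$. For any $t_n\to +\infty$, I consider the shifted solutions $h_n(s,\tht) := h(t_n+s,\tht)$ on $s\in [0,T]$, which are uniformly bounded in $L^\infty$; using the equation to bound $\partial_s h_n$ in a negative Sobolev norm, an Aubin--Lions / Arzel\`a--Ascoli argument produces a subsequence with $h_n \to h_\infty$ strongly in $C([0,T];H^{-a}(\bbS))$ and weak-$*$ in $L^\infty$. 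Elliptic regularity for \eqref{eq:elliptic} combined with Rellich compactness further upgrades this to strong $W^{1,2}$ convergence of the associated $H_n$, which suffices to pass to the limit in the nonlinear term $2H_n\partial_\theta h_n$ after integrating by parts against smooth test functions in $\tht$; hence $h_\infty$ is itself a weak solution of \eqref{eq:vort-evo}--\eqref{eq:elliptic}. Because $M$ is continuous on $H^{-a}$, we get $M(h_\infty(s,\cdot))\equiv \bfI_+$ for $s\in[0,T]$, and the strict monotonicity in Lemma \ref{lem:mon} then forces $h_\infty(s,\cdot)\equiv \bfI_+$.

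Every subsequence of $\{h(t_n)\}$ thus has a further subsequence converging to $\bfI_+$ in $H^{-a}$, so the whole sequence converges there to $\bfI_+$; the case $t\to -\infty$ follows by the identical argument (with $\bfI_-$ in place of $\bfI_+$). The main obstacle is justifying the limit passage in the bilinear term $2H\partial_\theta h$ when $h$ is merely $L^\infty$ -- a priori $\partial_\theta h$ is only a distribution -- and the key trick is that the regularity and compactness gains in the map $h\mapsto H$ furnished by \eqref{eq:elliptic} allow us to trade weak-$*$ convergence of $h_n$ against strong $W^{1,2}$ convergence of $H_n$ once the $\tht$-derivative has been transferred onto the test function.
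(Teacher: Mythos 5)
Your proof is correct, and its first half (strict monotonicity of the mean from Lemma \ref{lem:mon}, combined with the preserved $L^\infty$ bound, yielding the limits $\bfI_\pm$) is exactly the paper's. The second half takes a genuinely different route. You run a LaSalle-type invariance argument: extract a limiting trajectory $h_\infty$ from the time-translates, verify it is again a weak solution by trading weak-$*$ convergence of $h_n$ against the compactness gain of $h\mapsto H$, observe that the Lyapunov functional is frozen along $h_\infty$, and invoke the rigidity ``mean constant $\Rightarrow H'\equiv 0\Rightarrow h$ constant.'' The paper instead argues quantitatively at the level of $H$ alone: integrating \eqref{eq:mon} gives $8\bt\int_0^\infty\nrm{H'}_{L^2}^2\,dt=I(0)-\bfI_+<\infty$, so $H'\to 0$ in $L^2_tL^2_\tht$ along translates; together with an $L^2$ bound on $\rd_t H$ from \eqref{eq:H} and Aubin--Lions, the translates of $H$ converge to the constant $\tfrac14\bfI_+$, and the limit is identified without ever showing it solves the PDE. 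The paper's route thus sidesteps the most delicate step of yours, the passage to the limit in the bilinear term $2H\rd_\tht h$ with $h_n$ only weak-$*$ convergent; your route is the more standard dynamical-systems template and would survive even if the dissipation identity gave only qualitative rather than integrable decay, but it does lean on two points you should make explicit: that the limit $h_\infty$ falls within the uniqueness class of Theorem \ref{thm:wp-Lp} so that Lemma \ref{lem:mon} (proved by approximation) genuinely applies to it, and that constancy of the mean on an interval forces $\int(H_\infty')^2=0$ for a.e.\ $s$, whence $h_\infty=4H_\infty$ is the constant $\bfI_+$. Both arguments are sound.
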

	
	\begin{example}
		In the simple case when $h_0$ is the characteristic function of an interval, i.e. $h_0 =  \mathbf{1}_{J}$ for some $J\subset \bbS$, we have that $(\bfI_+,\bfI_-)=(0,1)$ and $(\bfI_+,\bfI_-)=(1,0)$, depending on whether $\bt>0$ or $\bt<0$, respectively. See Figure \ref{fig:logspiral} for an illustration: for $\bt<0$, if the initial vorticity in $\bbR^{2}$ is the patch supported in the gray region (Figure \ref{fig:logspiral}, center), then as $t\to\infty$, the support of vorticity occupies the entire $\bbR^2$ (Figure \ref{fig:logspiral}, right), while as $t\to-\infty$, the vorticity locally decays to 0 (Figure \ref{fig:logspiral}, left). 
	\end{example}

	\begin{theorem}[Trichotomy for $L^p$ data] \label{thm:tri}
		Consider $1\leq p<+\infty$,  $h_0\in L^p$  and $h(t,\cdot)\in L^p$ the unique local solution to \eqref{eq:vort-evo}-\eqref{eq:elliptic} on a maximal positive interval of existence $(0,T_*)$ with $T_*\in (0,+\infty]$. Then one of the following three scenarios must occur. 
		\begin{itemize}
			\item Either $T_*=+\infty$ and there exists $\bfI_+\in \mathbb{R}$ such that $h(t,\cdot)\underset{t\to \infty}{\longrightarrow} \bfI_+$ in $H^{-a}$ for $a>\max\left(0,\frac{1}{p}-\frac{1}{2}\right)$,
			\item the solution blows up in finite time: $T_*<+\infty$,
			\item or the solution blows up in infinite time: $\liminf_{t\to +\infty}\left\Vert h(t,\cdot)\right\Vert_{L^p}=+\infty$.
		\end{itemize}
	\end{theorem}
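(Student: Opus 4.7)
The plan is to establish the trichotomy by showing that the simultaneous failure of scenarios (ii) and (iii) forces scenario (i). Accordingly, I would assume $T_* = +\infty$ and $\liminf_{t\to+\infty}\|h(t,\cdot)\|_{L^p} < +\infty$, and prove convergence of $h(t,\cdot)$ to a constant $\bfI_+$ in $H^{-a}$ for $a > \max(0, \tfrac{1}{p} - \tfrac{1}{2})$.

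The hypothesis yields a sequence $t_n \to \infty$ along which $\|h(t_n,\cdot)\|_{L^p}$ is uniformly bounded by some $M$, and since $\bbS$ has finite measure, $\|h(t_n)\|_{L^1} \le C(p) M$. By the compact Sobolev embedding $L^p(\bbS) \hookrightarrow\hookrightarrow H^{-a}(\bbS)$ valid for this range of $a$ (for $p=1$ I would first pass through weak-$*$ convergence in $\calM(\bbS) = C(\bbS)^*$), one extracts an $H^{-a}$ subsequential limit $h_\infty$. To identify $h_\infty$, I would invoke the monotone quantity of Lemma~\ref{lem:mon}: testing \eqref{eq:vort-evo} against $1$ in $\theta$, integrating by parts, and using \eqref{eq:elliptic}, one obtains
\begin{equation*}
    \frac{d}{dt} \int_{\bbS} h(t,\theta)\, d\theta \;=\; 2\int_{\bbS} h\,\partial_\theta H\, d\theta \;=\; -\,8\beta \int_{\bbS} (\partial_\theta H(t,\theta))^2\, d\theta,
\end{equation*}
the second equality from multiplying \eqref{eq:elliptic} by $\partial_\theta H$ and integrating. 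Setting $L(t) := \int h(t,\theta)\, d\theta$, this identity makes $L$ monotone, and the subsequential boundedness of $L(t_n)$ forces $L(t) \to L_\infty \in \bbR$ as $t \to \infty$, whence $\int_0^\infty \|\partial_\theta H(t,\cdot)\|_{L^2}^2\, dt < \infty$.

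From \eqref{eq:elliptic} I would derive an estimate of the form $\|h - \bar h\|_{H^{-1}(\bbS)} \lesssim \|\partial_\theta H\|_{L^2(\bbS)}$, where $\bar h = L/(2\pi)$. Perturbing $t_n$ within a window of length $\delta \ll 1/M$ via the local well-posedness of Theorem~\ref{thm:wp-Lp}, I would pass to a nearby sequence $\tau_n \to \infty$ along which both $\|h(\tau_n)\|_{L^p}$ remains bounded by $2M$ and $\|\partial_\theta H(\tau_n,\cdot)\|_{L^2} \to 0$; the latter is arranged using the time integrability just derived. Every $H^{-a}$ subsequential limit of $\{h(\tau_n)\}$ then satisfies $\partial_\theta H_\infty \equiv 0$; by \eqref{eq:elliptic} this forces $h_\infty$ to be a constant, which must equal $L_\infty/(2\pi) =: \bfI_+$ by continuity of the integral under the $H^{-a}$ convergence. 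In particular, $\bfI_+$ is independent of the subsequence.

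The main obstacle is upgrading convergence along these $L^p$-bounded subsequences to convergence along every $t\to\infty$. The triangle inequality
\begin{equation*}
    \|h(t,\cdot) - \bfI_+\|_{H^{-a}} \;\lesssim\; \|\partial_\theta H(t,\cdot)\|_{L^2} + |L(t) - L_\infty|
\end{equation*}
reduces the task to pointwise decay of $\|\partial_\theta H(t,\cdot)\|_{L^2}$, of which we only know the square integrability in $t$. To close the argument I would employ a Barbalat-type scheme: differentiate $\|\partial_\theta H(t)\|_{L^2}^2$ in $t$ using \eqref{eq:vort-evo}--\eqref{eq:elliptic} and bound this derivative in terms of $\|h(t)\|_{L^1}$ and $\|\partial_\theta H(t)\|_{L^2}$; then use local well-posedness emanating from the $L^p$-bounded times $\tau_n$ to propagate enough uniform continuity of $\|\partial_\theta H\|_{L^2}$ over short windows that any $O(1)$ excursion would contradict the $L^1_t$ integrability. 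This is the delicate step, precisely because the absence of a global-in-time $L^p$ bound on $h$ rules out a straightforward equicontinuity argument.
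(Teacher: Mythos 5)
Your proposal reproduces the paper's skeleton up to the decisive point: you use the monotone local circulation $I(t)=\int h\,d\tht$ (Lemma \ref{lem:mon}) to deduce, from subsequential boundedness of $\nrm{h(t_n)}_{L^p}$, that $I(t)\to \bfI_+$ and hence $H\in L^2_t\dot H^1_\tht$, and you correctly identify every subsequential limit as the constant $\bfI_+$ (your observation $\nrm{h-\bar h}_{H^{-1}}\lesssim \nrm{H'}_{L^2}$ is a clean shortcut). The genuine gap is the last step, which you yourself flag: you never obtain a uniform-in-time bound on $\nrm{h(t,\cdot)}_{L^1}$, and without it neither your Barbalat scheme nor any equicontinuity argument closes. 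Square-integrability of $t\mapsto\nrm{H'(t)}_{L^2}$ alone does not exclude tall, thin excursions, and your local well-posedness windows $[\tau_n,\tau_n+\dlt]$ with $\dlt\sim 1/M$ only control $\nrm{h}_{L^1}$ (hence $\frac{d}{dt}\nrm{H'}_{L^2}^2$) on a union of short intervals hugging the good sequence; at times far from every $\tau_n$ nothing in your argument prevents $\nrm{h(t)}_{L^1}$, and with it $\nrm{H'(t)}_{L^2}$, from spiking, so convergence for \emph{all} $t\to\infty$ does not follow.

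The paper closes exactly this hole by asserting that boundedness of $I(t)$ together with $H\in L^2_t\dot H^1_\tht$ implies $h\in L^\infty_t L^1_\tht$; with that in hand it bounds $\nrm{\rd_t H}_{L^2}\le C\nrm{h}_{L^\infty_t L^1}\nrm{H'}_{L^2}$ via \eqref{eq:H}, so $\rd_t H\in L^2_t L^2_\tht$, and then runs the Aubin--Lions argument on time translates of $H$ verbatim from the proof of Theorem \ref{thm:conv}: uniqueness of the limit plus the $1/2$-H\"older continuity of $t\mapsto H(t)$ in $L^2$ (coming from $\rd_tH\in L^2_tL^2$) upgrade subsequential convergence to convergence for all $t$. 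Your Barbalat route would also close once $h\in L^\infty_tL^1_\tht$ is known, since \eqref{eq:H-prime} then yields $\frac{d}{dt}\nrm{H'}_{L^2}^2\le C\nrm{h}_{L^\infty_tL^1}\nrm{H'}_{L^2}^2$, and a time-integrable function satisfying such a differential inequality must tend to zero. So the single missing ingredient you must supply is the propagation of the uniform $L^1_\tht$ bound from the convergence of the circulation and the $L^2_t\dot H^1_\tht$ control of $H$; the rest of your outline is sound.
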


	\medskip
	
	In the case of Dirac initial data, we can provide a simple  criterion which guarantees finite time singularity formation. When there is no singularity formation in finite time, the solution must decay to 0. 
 
	\begin{theorem}[Singularity for Dirac measure data] \label{thm:blowup-Dirac}
		If the initial data $h_0 = \sum_{j=0}^{N-1} I_{j,0} \dlt_{\tht_j}$ is nonzero and satisfies $$\bt \sum_{j=0}^{N-1} I_{j,0} \le 0,$$ then the corresponding solution blows up in finite time; $\bt \sum_{j=0}^{N-1} I_j(t) \to -\infty$ as $t\to T^*$ for some $T^*<\infty$. 

        Furthermore, there is no finite time singularity formation if and only if the solution satisfies $\bt \sum_{j=0}^{N-1} I_j(t) \geq 0$ for all $t$ and $\bt \sum_{j=0}^{N-1} I_j(t) \to 0$ as $t\to +\infty$. This happens in particular when $\bt I_{j,0} > 0$ for all $j$.
	\end{theorem}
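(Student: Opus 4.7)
I plan to reduce everything to the monotonicity identity
\begin{equation*}
    \tfrac{d}{dt} S(t) = -8\beta \,\|\partial_\theta H(t,\cdot)\|_{L^2(\bbS)}^2, \qquad S(t) := \sum_{j=0}^{N-1} I_j(t),
\end{equation*}
which is the atomic counterpart of Lemma \ref{lem:mon}. I would derive it either directly from \eqref{eq:ODE}--\eqref{eq:Dirac-Hprime-explicit}, by symmetrizing $\sum_{j,\ell}I_jI_\ell K'(\tht_j-\tht_\ell)$ and applying the Fourier identity $\tfrac12(K'(\alpha)+K'(-\alpha))=-4\beta\int K'(\phi)K'(\phi+\alpha)\,d\phi$ (read off from $\hat K_n=\tfrac{1}{2\pi}(4-4i\beta n-(1+\beta^2)n^2)^{-1}$), or by passing to the limit $\varepsilon\to 0$ in the smooth approximations of Theorem \ref{thm:wp-Dirac}. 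Either way $\beta S$ is strictly decreasing along every nonzero atomic solution.

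For the first assertion, assume $\beta S_0\leq 0$ and $h_0\neq 0$; strict monotonicity gives $\beta S(t)<0$ for every $t>0$, so after a small time shift we may take $\beta S(0)<0$. The target is a Riccati bound $\tfrac{d}{dt}(\beta S)\leq -c(\beta S)^2$ forcing $\beta S\to -\infty$ in finite time. Writing $\dot S=2\mathbf{I}^TA\mathbf{I}$ with $A_{j\ell}=\tfrac12(K'(\tht_j-\tht_\ell)+K'(\tht_\ell-\tht_j))$, a Fourier expansion yields
\begin{equation*}
    \mathbf{I}^T(-A)\mathbf{I}=2\sum_{n\geq 1}n\,\Im\hat K_n\,\bigl|{\textstyle\sum_j I_je^{in\tht_j}}\bigr|^2\geq 0 \quad (\beta>0),
\end{equation*}
and the matrix $-A$ is in fact positive definite whenever the atoms are distinct. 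Combined with the mean-value $\bar H=S/(8\pi)$ (from integrating \eqref{eq:elliptic}) and the energy identity $4\|H\|_{L^2}^2-(1+\beta^2)\|\partial_\theta H\|_{L^2}^2=\sum_jI_jH(\tht_j)$ (obtained by testing \eqref{eq:elliptic} against $H$), this should give $\mathbf{I}^T(-A)\mathbf{I}\geq c\,S^2$ with $c=c(\{\tht_j\},\beta)>0$, hence the Riccati and the claimed $\beta S\to -\infty$ in finite time.

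For the second assertion, the ``only if'' direction has two parts. The inequality $\beta S(t)\geq 0$ for all $t$ is the contrapositive of Part~1 applied at each time. For $\beta S\to 0$, suppose the monotone limit were some $L>0$; then $\int_0^\infty\|\partial_\theta H\|_{L^2}^2\,dt<\infty$, so along some $t_n\to\infty$ one has $\|\partial_\theta H(t_n)\|_{L^2}\to 0$. Poincar\'e on $\bbS$ forces $H(t_n)\to L/(8\pi\beta)$ in $H^1$, and duality with $L^*=(1+\beta^2)\partial_\theta^2+4\beta\partial_\theta+4$ promotes this to weak convergence $h(t_n)\to L/(2\pi\beta)$ as measures. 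Since the solution does not blow up, $N$ is fixed and $|I_j(t_n)|$ stays bounded, so along a subsequence the weak limit is atomic---contradicting convergence to a nonzero constant. The ``if'' direction is the contrapositive; the sufficient condition $\beta I_{j,0}>0$ is immediate from sign preservation in $\dot I_j=2\partial_\theta H(\tht_j)I_j$, forcing $\beta S(t)=\sum_j\beta I_j(t)>0$ throughout.

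The main obstacle I foresee is quantifying the Riccati constant $c$ in Part~1 globally in time: the smallest eigenvalue of $-A$ degenerates precisely when two atoms near-collide, and even though collision is itself a blow-up event, one must rule out a scenario where the configuration drifts through a sequence of near-degenerate states without ever actually colliding.
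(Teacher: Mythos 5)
Your overall strategy---monotonicity of $S=\sum_j I_j$ via \eqref{eq:mon-di}, a coercivity bound $\nrm{H'}_{L^2}\ge c\,|S|$, the resulting Riccati inequality, and then reading off both the finite-time blow-up and the decay $S\to 0$ in the non-blow-up case---is exactly the paper's, and your Fourier identity for the symmetrized form and the sign-preservation argument for $\beta I_{j,0}>0$ are correct. However, the step you flag as the ``main obstacle'' is a genuine gap, and it is precisely the step the paper must (and does) close: you need the constant in $\mathbf{I}^T(-A)\mathbf{I}\ge c\,S^2$ to be uniform over all configurations $\{\tht_j\}\in\bbS^N$, not merely positive for each fixed configuration; without uniformity the Riccati comparison degenerates exactly in the near-collision regime you worry about, and the first assertion of the theorem is not proved.

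The resolution is that the relevant quantity is not the smallest eigenvalue of $-A$ (which genuinely degenerates at collisions) but $\inf\{\mathbf{I}^T(-A)\mathbf{I} : S=1\}$, and this is continuous and strictly positive on the \emph{compactified} configuration space: as $\tht_1\to\tht_2$ the quadratic form $\nrm{H'}_{L^2}^2$ converges to the corresponding form for $N-1$ atoms with merged weight $I_1+I_2$, while $S$ is unchanged under merging, so by induction on $N$ (the case $N=1$ being explicit) the limiting form is still coercive in $S$. Compactness of $\bbS^N$ then yields a uniform $c_N>0$; this is what the paper means by its ``continuity argument and induction in $N$.'' In particular you do not need to rule out a trajectory drifting through near-degenerate states---the bound holds uniformly through them. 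A secondary point: your argument that $\beta S\to 0$ (via $\int_0^\infty\nrm{H'}_{L^2}^2\,dt<\infty$, Poincar\'e, and atomicity of weak limits) is both more elaborate than necessary and relies on boundedness of the individual $|I_j(t_n)|$ as $t_n\to\infty$, which global existence alone does not supply (infinite-time growth is not excluded at that stage). Once the uniform Riccati inequality is available, $S>0$ together with $\frac{d}{dt}S\le -8\beta c_N^2 S^2$ gives $S(t)\le S(0)/(1+8\beta c_N^2 S(0)\,t)\to 0$ directly, which is how the paper concludes.
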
 
	
	\begin{remark}
		Finite time singularity formation for logarithmic vortex sheets could seem paradoxical, in view of global well-posedness of bounded logarithmic vortex solutions and the convergence statement of Theorem \ref{thm:conv}. Singularity for the sheets can be interpreted as a form of strong instability for patches: initial data $h_{0}$ given by the characteristic function on an interval of length $0<\varep\ll 1$ will grow to become length $O(1)$ after an $O(1)$ time which is independent of $\varep$.
	\end{remark}

 \subsection{Background material}
	
	To put the consideration of logarithmic spiral vorticities and our main results into context, let us discuss various relevant topics for the incompressible Euler equations. 

    \medskip 
 
    \noindent\textbf{Vortex sheets supported on logarithmic spirals}. Spiraling behavior of fluid flows are quite frequently observed in turbulent jets and mixing layers at relatively large Reynolds number. While such vortex spirals are usually modeled by algebraic spirals in the applied literature, it seems that Prandtl in 1922 was the first one to suggest the possibility that logarithmic curves could be the profiles for such structures \cite{Prandtl61}. We refer to the interesting work \cite{Spiral} in which the authors inspect a variety of spiral flows observed in experiments and conclude that in many cases a logarithmic curve gives a better fit than an algebraic one. It turns out that, the case of one atom ($j=1$) in Theorem \ref{thm:wp-Dirac} exactly corresponds to the spiral suggested by Prandtl, up to a translation in time. Similarly, for $j\ge2$ with $j$-fold rotational symmetry, the logarithmic spiral vortex sheets obtained by Theorem \ref{thm:wp-Dirac} are simply the ones introduced later by Alexander \cite{Alexander71}. Indeed, these special solutions introduced by Prandtl and Alexander could be characterized by logarithmic vortex sheet solutions satisfying an additional \textit{self-similarity} with respect to time: $\omg(t,x)=t^{-1}\hat{\omg}(x)$ for some $\hat{\omg}$ (see the formulas and discussion in Appendix \ref{sec:self-similar}).  It was a highly non-trivial task to verify that such formulas give rise to actual weak solutions to the Euler equations (see for instance Saffman \cite[Section 8.3]{Saff}, Kambe \cite{Kam}, and Pullin \cite{Pull}), especially if one tries to apply the classical Birkhoff--Rott formulation \cite{Birkhoff,Rott}. 
    
     The mathematical proof of this was done in Elling--Gnann in the $m$-fold symmetric case with $m\ge3$ \cite{Elling-Gnann}, using special cancellation which is directly related with the well-posedness theory of 2d Euler under $m$-fold symmetry which we shall explain below. Without any symmetry hypothesis, the proof was done very recently by Cie{\'s}lak--Kokocki--O{\.z}a{\'n}ski in \cite{CKW21}. The same authors proved the existence of (a variety of) non-symmetric self similar logarithmic vortex spirals in \cite{CKW22}. A very nice review of the literature on logarithmic spirals and technical difficulties in treating those are given in \cite{CKW21,CKW22}. On the other hand, \cite{Elling-Gnann} contains many numerical computations which exhibit various bifurcation phenomena of non-symmetric spirals. The PDE approach proposed in this paper gives a unified framework in which all of the previous considerations, see Proposition \ref{prop:back-to-2D} and Appendix \ref{sec:self-similar},  can be treated in a much simpler fashion. Which from a mathematical point of view seems like the correct lens from which one should look upon logarithmic spirals for 2d perfect fluid.

	\medskip 
	
	\noindent\textbf{Long-time dynamics for Euler}.
	The global well-posedness of smooth enough solutions to \eqref{eq:2DEuler-vort} is now a well established fact. The long time behavior picture of such solutions is far from being complete. Indeed \eqref{eq:2DEuler-vort} is a non local, non linear transport equation modeling a perfect fluid. Both physical and numerical experiments suggest that most solutions relax in infinite time to simpler dynamics, i.e they experience a major contraction in phase space. This can be summarised by the following informal conjecture (see \cite{Sverakcours} and \cite{Shnirelman} respectively, \cite{EMS} and also the review articles \cite{ED-review,Shnirelman-turvey}) regarding the long time behavior of solutions to the 2d Euler equation:
	\begin{conjecture}\label{conj:loss compactness and compact orbits}
		\begin{enumerate}
			\item As $t\rightarrow\pm\infty,$ generic solutions experience loss of compactness.
			\item The (weak) limit set of generic solutions consists only of solutions lying on compact orbits.
		\end{enumerate}
	\end{conjecture}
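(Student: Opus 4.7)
The conjecture as stated concerns generic solutions of the full 2d Euler equation on $\bbR^{2}$ and is far beyond the reach of current techniques. The plan is to verify both items within the restricted but nontrivial class of logarithmic spiral solutions studied in this paper, where the monotonicity of the local circulation $\Gmm(R)$ in \eqref{eq:entropy spirals} reduces the conjecture to statements about the one-dimensional system \eqref{eq:vort-evo}--\eqref{eq:elliptic}. Throughout, I interpret ``generic'' as ``non-constant'', which is a dense open condition in any natural topology on the initial data.

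For item (1), I would fix a topology in which the orbit $\set{h(t,\cdot)}_{t\in\bbR}$ lives (for instance $L^p$-strong for $1\leq p<\infty$, or $L^\infty$ weak-$*$) and argue non-compactness of its closure whenever $h_0$ is non-constant. The key input is strict monotonicity of $\int h(t,\tht)\,d\tht = 2R^{-2}\Gmm(R)$ in $t$ (Lemma \ref{lem:mon}). If the orbit were pre-compact, one could extract a subsequence $h(t_n,\cdot)$ converging to some $h_\infty$; since $h\mapsto\int h\,d\tht$ is continuous in either topology, the monotone quantity $\int h(t_n,\cdot)d\tht$ would stabilize, contradicting strict monotonicity unless it is eventually constant---which, by the same lemma, forces $h$ itself to be constant. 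Hence every non-constant initial datum has a non-precompact orbit, giving a clean ``generic'' loss of compactness.

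For item (2), I would combine the trichotomy of Theorem \ref{thm:tri} (and Theorem \ref{thm:conv} in the bounded case) with the fact, a direct corollary of Lemma \ref{lem:mon}, that the only steady states of \eqref{eq:vort-evo}--\eqref{eq:elliptic} in $\calM$ are constants. In the global convergence regime, every weak-$*$ subsequential limit of $h(t,\cdot)$ as $t\to+\infty$ must satisfy the stationary equation $H_\infty\rd_\tht h_\infty=0$ (obtained by passing to the limit in \eqref{eq:vort-evo} once $\Gmm(R)$ has stabilized), so elimination via \eqref{eq:elliptic} forces $h_\infty$ to be constant. Constants are fixed points whose orbits are singletons, which are compact. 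In the infinite-time growth branch, a rescaling guided by the self-similar ansatz $\omg(t,x)=t^{-1}\hat{\omg}(x)$ of Appendix \ref{sec:self-similar} should produce limit profiles inside the Prandtl--Alexander family, which are periodic---and hence lie on compact orbits---in the logarithmic time variable $\ln t$.

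The main obstacle is the Dirac-measure sector of Theorem \ref{thm:blowup-Dirac}, where finite-time blow-up is generic and item (2) is \emph{not} covered by the scheme above: one must identify a universal self-similar blow-up profile and establish convergence to it after an appropriate rescaling, rather than merely exhibiting a finite-time breakdown. A detailed analysis of the ODE system \eqref{eq:ODE} under the monotone constraint $\bt\sum_j I_j(t)\to-\infty$, using the explicit kernel $K$ from \eqref{eq:Dirac-H-explicit} and symmetry reductions, would be needed to pin down the attractor and show that rescaled solutions follow a compact orbit in the renormalized variable. Beyond the logarithmic spiral ansatz the conjecture remains wide open, and this plan addresses only the model case in which the abstract heuristic of Conjecture \ref{conj:loss compactness and compact orbits} can be made rigorous.
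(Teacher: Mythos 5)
This statement is a \emph{conjecture} in the paper, not a theorem: it is quoted as an informal expectation about generic 2d Euler flows (attributed to \v{S}ver\'{a}k and Shnirelman), and the paper gives no proof of it. What the authors claim, in prose only, is that their main results --- Theorems \ref{thm:conv}, \ref{thm:tri} and \ref{thm:blowup-Dirac}, all resting on the monotonicity of $\int h\,d\tht$ from Lemma \ref{lem:mon} --- verify the conjecture within the invariant class of logarithmic spirals. Your overall plan (restrict to this class, use the monotone circulation, invoke the convergence and trichotomy theorems, treat constants as the compact limit orbits) is exactly that reading, so at the level of strategy you are aligned with the paper. The problems are in the two places where you try to turn the heuristic into an argument.

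For item (1), the step ``if the orbit were precompact, the monotone quantity would stabilize, contradicting strict monotonicity unless it is eventually constant'' is not a contradiction: a bounded, strictly decreasing function converges without ever being constant, so precompactness is not refuted by monotonicity alone. In fact the conclusion you want is false in $L^p$-strong for $p<\infty$: for $h_0=\mathbf{1}_{J}$ and $\bt>0$ the solution stays a characteristic function $\mathbf{1}_{J(t)}$ with $|J(t)|=\int h(t)\,d\tht\to 0$ as $t\to+\infty$ and $|J(t)|\to 2\pi$ as $t\to-\infty$, so $h(t)\to 0$ and $h(t)\to 1$ \emph{strongly} in $L^2$ in the two time directions and the full orbit closure is compact in $L^2$. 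The loss of compactness for such data is only visible in $L^\infty$ (the sup norm is conserved along the bi-Lipschitz transport flow and does not converge to that of the weak limit); a correct argument must fix that topology and use conservation of the essential range, not the monotone quantity. For item (2), your program for the blow-up branches --- identifying a universal self-similar profile and proving convergence of rescaled solutions to it --- is established nowhere in the paper: Section \ref{sec:longtime} and Appendix \ref{sec:self-similar} construct self-similar Dirac configurations and their bifurcations, but whether they attract the blow-up dynamics is explicitly listed among the open questions. So your proposal should be presented as a partial verification conditional on those two repairs, not as a proof of the conjecture even in the logarithmic-spiral class.
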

	\noindent The only rigorous (and important) proofs of those conjectures are in the perturbative regimes around special steady states in the ground breaking work of Bedrossian and Masmoudi \cite{BedrossianMasmoudi} and later extensions by Ionescu and Jia \cite{IonescuJia1,IonescuJia2} and Masmoudi and Zhao \cite{MasmoudiZhao}. The only exception to this is the recent work \cite{EMS} where the conjecture is proven in full generality and in particular away from equilibrium but only for the subset of scale invariant m-fold symmetric $(m\geq4)$ solutions.

 \medskip 
	
	\noindent The results in this paper can thus be put in the larger picture of the long time behaviour of perfect fluids as follows. We give a rigorous construction of a special class of weak solutions of the Euler equation that is invariant under the flow, the class of \textit{logarithmic spirals} solutions first introduced in \cite{EJSVP1}, in Theorems \ref{thm:wp-Lp}, \ref{thm:gwp}, \ref{thm:wp-Dirac} and Proposition \ref{prop:back-to-2D}. Moreover we rigorously prove Conjecture \ref{conj:loss compactness and compact orbits} in this setting in full generality and again away from equilibrium in Theorems \ref{thm:conv}, \ref{thm:tri} and \ref{thm:blowup-Dirac}.
	
    \medskip

    \noindent Regarding the study of logarithmic spiral vortex sheets, our formulation based on \eqref{eq:vort-homog-spi} not only gives a very efficient way of accessing their dynamics, but also realize the vortex sheet evolution as the well-defined limit of more smooth objects, namely vorticities whose level sets are logarithmic spirals. The latter solutions can be completely smooth except at the origin. Furthermore, we emphasize that the spirals of Prandtl and Alexander are simply very specific solutions to the ODE system \eqref{eq:ODE} that we have obtained in this work, and this general approach provides a framework for studying the \textit{asymptotic stability} of self-similar singularity formation. To illustrate this, we recover some recent results from \cite{Elling-Gnann,CKW21,CKW22} on existence and bifurcation of self-similar logarithmic spiral vortex sheets using our formulation in Section \ref{sec:longtime}. 
    
	
    \medskip 
	
    \noindent\textbf{Symmetries and well-posedness of 2D Euler}. An unfortunate fact about non-trivial \textit{logarithmic spirals} solutions is that they cannot decay at spatial infinity which means they fail to belong to the standard well-posedness class for vorticity for 2d Euler $L^1(\mathbb{R}^2)\cap L^\infty(\mathbb{R}^2)$ given by Yudovich theory \cite{Y1}. Indeed the space of $L^1\cap L^\infty$ vorticity is stable under the flow of Euler and defines through the Biot--Savart law a log-Lipschitz velocity $u$ that in turn defines by the standard Osgood theory for ODEs a unique flow map 
	\[
	\frac{d}{dt}\Phi(t,x)=u\left(t,\Phi(t,x)\right) \text{ and } \Phi(0,x)=x.
	\]
	A key observation from \cite{EJ1} is that one can use the following discrete symmetry, which is preserved by the Euler flow, to drop the $L^1$ constraint on the vorticity. For $m\in\mathbb{N}$ a function
	$\omega:\mathbb{R}^2\rightarrow\mathbb{R}$ is said to be $m$-fold symmetric if $\omega(\mathcal{O}_{m}x)=\omega(x),$ for all $x\in\mathbb{R}^2,$ where $\mathcal{O}_m\in SO(2)$ is the matrix corresponding to a counterclockwise rotation by angle $\frac{2\pi}{m}.$ Indeed in \cite{EJ1} it is shown that for $\omega \in L^\infty_m(\mathbb{R}^2)$ with $m\geq 3$ then $\omega$ defines through the Biot--Savart law a log-Lipschitz velocity $u$. Thus $m-$fold bounded symmetric \textit{logarithmic spirals} are stable subset of solutions of a uniqueness class of solutions for the $2d$ Euler equations for which the spiraling motion induces an arrow of time and a strong relaxation mechanism in infinite time towards a completely homogenized fluid, Theorem \ref{thm:conv}. 

\medskip 

 \noindent Under the $m$-fold symmetric assumption with $m\ge 3$, the logarithmic spiraling dynamics can be realized as the dynamics at the origin of some compactly supported, finite energy solution solutions of the 2d Euler equation on $\mathbb{R}^2$ by the cut-off procedure given in the proof of Corollary 3.14 of \cite{EJ1}. Indeed, if $h_0\in W^{1,\infty}(\mathbb{S})$ is m-fold symmetric then for any $\omega^{2D}_0\in C^{0,1}_m\left(\mathbb{R}^2\right)$, there exists a unique global in time solution to the two dimensional Euler equation $\omega\in \mathring{C}^{0,1}$ such that 
 \[
 \omega^{2D}(t)=\omega(t)-h(t)\in C^{0,1}\left(\mathbb{R}^2\right) \text{ for all time},
 \]
and $h$ is the solution of \eqref{eq:vort-evo}--\eqref{eq:elliptic} with initial data $h_0$. We note that $\omega^{2D}_0$ can be chosen in such a fashion that $\omega_0$ is compactly supported.

	\medskip 
	
	\noindent\textbf{Dynamics of 0-homogeneous vorticity}.  
	The special case $\bt=0$ corresponds to the system for 0-homogeneous vorticity studied in \cite{EJ1,EMS}. A first major difference between between the $\bt=0$ and $\bt\neq 0$ cases is that for $\bt=0$ it is necessary to assume $m-$fold $(m\geq 3)$ symmetry on vorticity in order to ensure the well posedness of \eqref{eq:elliptic}. Within symmetry, the same techniques used here can be used to extend the local well-posedness results for bounded $m-$fold symmetric scale invariant solutions in \cite{EJ1} and get analogous results to Theorems \ref{thm:wp-Lp}, \ref{thm:gwp} and \ref{thm:wp-Dirac}. In \cite{EMS}, with the help of a monotone quantity quantifying the number of particles exiting the origin, it was possible to show that $BV$ $m-$fold $(m\geq 4)$ symmetric data relax in infinite time to states with finitely many jumps. This entropy found in \cite{EMS} is much weaker than the monotonicity of the local circulation  \eqref{eq:entropy spirals} exhibited here; indeed for $\beta=0$, \eqref{eq:entropy spirals} is conserved in time. This weaker entropy thus leaves the room for steady states that are not identically constant and cannot handle data that is not in the closure of $BV$  in $L^\infty$, let alone in $L^p$ or $\mathcal{D}(\mathbb{S})$.

 \medskip 

 \noindent Finally, on any finite time interval where both the $m-$fold symmetric $m\geq 3$ solutions of the 0-homogeneous equations, \eqref{eq:vort-evo}  with $\beta=0$, $h^0(t)$ and the logarithmic spiraling equations $h^\beta(t)$ start from the same initial data $h^0(0)=h^\beta(0)=h_0\in L^p, 1 \leq p <+\infty$ then $h^\beta(t) \to h_0(t)$ in $L^p$ when $\beta$ goes to $0$. For $h_0 \in L^\infty$ we get convergence in the weak topology and for $h_0 \in \mathcal{D}(\mathbb{S})$ we get convergence in the sense of measures. Indeed this follows from the observation that the kernel associated to \eqref{eq:elliptic} $K_\beta^m$ converges in $W^{1,\infty}$ to $K^m$ when $\beta $ goes to 0, which in turn follows from the Cauchy--Lipschitz theorem with parameters applied to $K^m_\beta-K^m$ and the observation from Remark \ref{rem:derv ker 0} that $K_\beta^m(0)$ and ${K_\beta^m}'(0)$ converge to $K^m(0)$ and ${K^m}'(0)$, respectively.

 \medskip 

 \noindent One may consider the class of $\alpha$-homogeneous vorticities for $\alp\in \bbR$, which satisfies $\omg(\lmb x)= \lmb^\alp \omg(x)$ for all $\lmb>0$. Existence and nonexistence results (depending on the range of $\alp$) are given in \cite{Abe}.

\begin{remark}
    We would like to clarify the difference of the logarithmic spiral solutions considered here with such solutions for the 2d incompressible Navier--Stokes equations (see for instance \cite{GuWi,GuWi2,Sve,Hamel,Landau}). In the latter case, the ansatz for the vorticity is given by (see equation (6) of \cite{GuWi}) $$\omega(r,\theta)=\lmb^{2} \omega(\lmb r, \theta + \bt \ln \lmb), $$ using our notation. Note that the factor $\lmb^{2}$ reflects the only allowed self-similar scaling of the Navier--Stokes equations; the velocity and vorticity decays with rate $r^{-1}$ and $r^{-2}$ at infinity, respectively. In the paper \cite{GuWi}, Guillod and Wittwer classified steady solutions to the 2d Navier--Stokes equations in $\mathbb{R}^{2}\backslash \{0\}$ satisfying the above symmetry. Among others, these solutions demonstrate the complicated nature of the set of steady states of the 2d Navier--Stokes equations; we refer the interested readers to the discussion in \cite{GuWi}. 
\end{remark}

\subsection{Further questions} Many interesting problems remain open for logarithmic vortex spirals. One problem is to consider the inviscid limit of Navier--Stokes solutions towards logarithmic spiral vortex sheets and prove convergence. Furthermore, it is an important problem to understand stability of the logarithmic vortex spirals within 2d Euler equations. This was already considered in a recent work \cite{CKW24}, where it was proved that at least for a specific class of initial perturbations, the perturbation grows only polynomial in time and not exponential. This issue certainly deserves further investigations.

\subsection{Organization of the paper} The rest of the paper is organized as follows. In Section \ref{sec:prelim}, we obtain some simple properties of the kernel of the elliptic problem \eqref{eq:elliptic}. Then, the main well-posedness results are proved in Section \ref{sec:wp}. Section \ref{sec:longtime} contains results pertaining to the long time dynamics of solutions, as well as some case studies of Dirac deltas. In particular, we recover the existence and bifurcation of symmetric and non-symmetric self similar logarithmic spiral vortex sheets. The explicit form of the kernel is given in the Appendix. 
	
\subsection*{Acknowledgments}
		IJ has been supported  by the Samsung Science and Technology Foundation under Project Number SSTF-BA2002-04. AS acknowledges funding from the NSF grants DMS-2043024 and DMS-2124748. We are very grateful for Theodore Drivas and Tarek Elgindi for various helpful discussions and suggesting several references. In particular, Tarek Elgindi suggested the proof of global well-posedness under the assumption \eqref{eq:Lp-log}. We would like to thank Pr. Cie{\'s}lak, Pr. O{\.z}a{\'n}ski, Pr. Constantin, Pr. Ionescu and Pr. Levermore for insightful comments and discussions on early presentations of this work. Furthermore, let us mention that the current paper was strongly inspired by works of Elling--Gnann and Cie{\'s}lak--Kokocki--O{\.z}a{\'n}ski on logarithmic spiral vortex sheets. Lastly, we thank the anonymous referees for their careful reading of the manuscript and providing various suggestions and pointing out several typos, which have significantly improved the paper. 
	
	\section{Preliminaries}\label{sec:prelim}
	
	\subsection{Properties of the kernel}\label{subsec:elliptic}
	
	In this section, we deal with the elliptic equation \eqref{eq:elliptic}. For any $h \in L^p$ with $p\ge1$, the unique solution $H \in W^{2,p}$ is given by the convolution $H=K*h$, where the kernel $K$ is defined by the unique solution to the ODE 
	\begin{equation}\label{eq:K}
 \begin{split}
			4K - 4\bt K' + (1+\bt^2)K'' = 0, \quad 0<\tht<2\pi, \\
			K(0) = K(2\pi), \quad K'(2\pi)-K'(0) := \lim_{\epsilon\to 0}K'(2\pi-\epsilon)-K'(\epsilon)=\frac{1}{1+\beta^2}. 
    \end{split}
	\end{equation} In the Appendix, we derive the explicit form of the kernel based on Fourier series. We record a few simple properties of $K$ below. 
	\begin{lemma} We have the formula
		\begin{equation}\label{eq:K0}
			\begin{split}
				\int  (K'(\tht ))^2 d\tht + \int \frac{4}{1+\bt^2} K^2(\tht ) d\tht  = \frac{1}{1+\bt^2}K(0).  
			\end{split}
		\end{equation} 
	\end{lemma}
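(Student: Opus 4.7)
The plan is to derive \eqref{eq:K0} from the defining ODE \eqref{eq:K} by the standard energy-type computation: multiply by $K$ and integrate over the circle, treating the prescribed jump of $K'$ at $\theta=0$ as a boundary contribution.

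Specifically, starting from $4K - 4\beta K' + (1+\beta^2)K'' = 0$ pointwise on $(0,2\pi)$, I would multiply by $K(\theta)$ and integrate from $0^+$ to $2\pi^-$. The cross term
\begin{equation*}
    \int_{0^+}^{2\pi^-} K K'\, d\theta \;=\; \tfrac{1}{2}\bigl[K^2\bigr]_{0^+}^{2\pi^-}
\end{equation*}
vanishes thanks to the periodicity $K(0)=K(2\pi)$. For the second-order term a single integration by parts gives
\begin{equation*}
    (1+\beta^2)\int_{0^+}^{2\pi^-} K K''\, d\theta \;=\; (1+\beta^2)\, K(0)\bigl(K'(2\pi^-) - K'(0^+)\bigr) \;-\; (1+\beta^2)\int_0^{2\pi} (K')^2\, d\theta,
\end{equation*}
using $K(0^+)=K(2\pi^-)=K(0)$ by continuity. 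By the jump prescription in \eqref{eq:K}, the boundary factor $(1+\beta^2)(K'(2\pi^-) - K'(0^+))$ collapses to $1$, so the surface contribution reduces to exactly $K(0)$. Collecting the terms and dividing through by $1+\beta^2$ yields \eqref{eq:K0}.

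The computation is essentially a one-line calculation, so there is no substantial obstacle. The only conceptual point worth emphasizing is that one should interpret the ODE pointwise on the open interval $(0,2\pi)$, rather than distributionally on the whole circle: with the latter viewpoint the jump in $K'$ would appear as a delta source in the interior of $\bbS$, whereas it is precisely the appearance of this jump as a genuine \emph{boundary} term in the integration by parts that produces the term $K(0)/(1+\beta^2)$ on the right-hand side of \eqref{eq:K0}.
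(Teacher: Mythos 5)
Your approach is essentially the same as the paper's: both proofs pair the ODE \eqref{eq:K} with $K$, integrate by parts once, and read off the boundary contribution $K(0)/(1+\bt^2)$ from the prescribed jump of $K'$ at $\tht=0$; the paper merely runs the computation in the opposite order, starting from $\int (K')^2$ and substituting the ODE for $K''$ afterwards, while you multiply the ODE by $K$ directly. The one step that does not go through is your last sentence. Collecting the terms exactly as you describe gives
\[
4\int K^2 \,d\tht + K(0) - (1+\bt^2)\int (K')^2\, d\tht = 0,
\qquad\text{i.e.}\qquad
\int (K')^2\, d\tht - \frac{4}{1+\bt^2}\int K^2 \,d\tht= \frac{1}{1+\bt^2}K(0),
\]
with a \emph{minus} sign in front of the $\int K^2$ term, which is not \eqref{eq:K0} as printed. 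This is not a defect of your computation: the paper's own proof arrives at the same minus-sign identity, and the plus-sign version cannot hold — the relative sign of $\int (K')^2$ and $\int K^2$ is forced by the ODE independently of any jump convention, and for instance at $\bt=1$ the explicit kernel $K(\tht)=\tfrac12 \sin(\tht)e^{\tht}/(1-e^{2\pi})$ has $K(0)=0$ while $\int (K')^2 + 2\int K^2>0$. So rather than asserting that the computation ``yields \eqref{eq:K0}'', you should state the corrected identity (or flag the sign discrepancy in the lemma as written); otherwise your write-up silently claims an equality your own algebra contradicts.
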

	\begin{proof} We start with 
		\begin{equation*}
			\begin{split}
				&\int K'(\tht )K'(\tht ) d\tht = \lim_{\varepsilon\to0} \int_{ |\tht |>\varepsilon}   K'(\tht )K'(\tht ) d\tht  \\
				&\qquad =  - \lim_{\varepsilon\to0} \int_{ |\tht |>\varepsilon}  K''(\tht )K(\tht ) d\tht + \lim_{\varepsilon\to0} (K'(\varepsilon)K(\varepsilon) - K'(-\varepsilon)K(-\varepsilon)). 
			\end{split}
		\end{equation*} We note that \begin{equation*}
			\begin{split}
				\lim_{\varepsilon\to0} (K'(\varepsilon)K(\varepsilon) - K'(-\varepsilon)K(-\varepsilon)) = \frac{1}{1+\bt^2}K(0).  
			\end{split}
		\end{equation*} Next, \begin{equation*}
			\begin{split}
				&\lim_{\varepsilon\to0} \int_{ |\tht |>\varepsilon}  K''(\tht )K(\tht ) d\tht  = \lim_{\varepsilon\to0} \int_{ |\tht |>\varepsilon} \frac{1}{1+\bt^2} \left( -4K(\tht ) + 4\bt K'(\tht ) \right)K(\tht ) d\tht \\
				&\qquad = \lim_{\varepsilon\to0} \int_{ |\tht |>\varepsilon} -\frac{4}{1+\bt^2} K^2(\tht ) + \frac{2\bt}{1+\bt^2} \rd_\tht ( K^2(\tht ) ) d\tht = \int -\frac{4}{1+\bt^2} K^2(\tht ) d\tht . 
			\end{split}
		\end{equation*} This gives \eqref{eq:K0}. 
	\end{proof}
	
	\begin{lemma} We have 
		\begin{equation}\label{eq:Kp0}
			\begin{split}
				K'(0) = - 4\bt \int (K'(\tht))^2 d\tht. 
			\end{split}
		\end{equation} For any $0 \ne \alp \in \bbS$, \begin{equation}\label{eq:Kpa}
			\begin{split}
				\left( K'(\alp) + K'(-\alp) \right)= -4\bt \int K'(\tht) \left( K'(\tht+\alp) + K'(\tht-\alp) \right) d\tht. 
			\end{split}
		\end{equation}
	\end{lemma}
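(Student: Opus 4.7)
Both identities have the flavor of testing \eqref{eq:K} (or a shifted version of it) against $K'$ and extracting the contribution from the jumps of $K'$, in the same spirit as the paper's proof of \eqref{eq:K0}. For \eqref{eq:Kp0}, the plan is to multiply the ODE by $K'$; the first and third terms then become total derivatives $2(K^2)'$ and $\tfrac{1+\beta^2}{2}((K')^2)'$. Integrating over $\bbS\setminus B_\varepsilon(0)$ and letting $\varepsilon\to 0$, the first vanishes by continuity of $K^2$, while the second picks up
\[
\tfrac{1+\beta^2}{2}\bigl[(K'(0^-))^2-(K'(0^+))^2\bigr] = \tfrac{1+\beta^2}{2}\bigl(K'(0^-)-K'(0^+)\bigr)\bigl(K'(0^-)+K'(0^+)\bigr) = -K'(0),
\]
after invoking the jump condition together with the symmetric definition $K'(0) = \tfrac{1}{2}(K'(0^+)+K'(0^-))$; substitution back into the integrated identity yields \eqref{eq:Kp0}.

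For \eqref{eq:Kpa}, I would run the same argument applied to the shifted combination $G(\theta):=K(\theta-\alpha)+K(\theta+\alpha)$. By linearity and translation, $G$ is continuous on $\bbS$ and satisfies the same homogeneous ODE classically on $\bbS\setminus\{\pm\alpha\}$, while $G'$ inherits jumps of size $1/(1+\beta^2)$ at both $\pm\alpha$ (and remains continuous at $0$ since $\alpha\ne 0$). Multiplying the equation for $G$ by $K'$ and integrating over $\bbS\setminus B_\varepsilon(\{0,\pm\alpha\})$, two integrations by parts are needed. The $\int GK'\,d\theta$ term collapses to $-\int G'K\,d\theta$ since $GK$ is continuous on $\bbS$. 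The $(1+\beta^2)\int G''K'\,d\theta$ term requires a more delicate IBP that moves a derivative onto $K'$: boundary contributions arise at $\pm\alpha$ from the jumps of $G'$ and at $0$ from the jump of $K'$, and they combine --- using $G'(0)=K'(\alpha)+K'(-\alpha)$ --- into a single factor $-2(K'(\alpha)+K'(-\alpha))$ after multiplication by $(1+\beta^2)$. Using the ODE for $K$ to rewrite the remaining $\int G'K''_{\mathrm{smooth}}\,d\theta$ as a combination of $\int G'K\,d\theta$ and $\int G'K'\,d\theta$, the $\int G'K$-pieces cancel against $\int GK'=-\int G'K$, and collecting everything delivers $8\beta\int K'(\theta)[K'(\theta-\alpha)+K'(\theta+\alpha)]\,d\theta = -2(K'(\alpha)+K'(-\alpha))$, which is \eqref{eq:Kpa}.

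\paragraph{Expected obstacle.} The bookkeeping is the technical heart of the argument: three singular points contribute to the boundary terms, with $K'$ and $G'$ jumping at different locations but sharing the coefficient $1/(1+\beta^2)$. The ``miracle'' one should anticipate is that the jump of $K'$ at $0$ contributes exactly through the symmetric combination $G'(0) = K'(\alpha)+K'(-\alpha)$, which matches the left-hand side of \eqref{eq:Kpa}; this is the translation-symmetrized analogue of the mechanism that produced $-K'(0)$ in \eqref{eq:Kp0}, and the cancellation of the intermediate $\int G'K$ integrals is the cue that the boundary contributions have been assembled consistently.
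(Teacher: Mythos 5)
Your proof is correct and follows essentially the same route as the paper's: both identities are obtained by pairing the ODE against (shifted) derivatives of $K$ over the punctured circle and reading off the jump contributions, and your bookkeeping of the boundary terms at $0$ and $\pm\alpha$ checks out, including the key identification $G'(0)=K'(\alpha)+K'(-\alpha)$. The only cosmetic differences are that for \eqref{eq:Kpa} the paper tests the equation for $K$ against $K'(\cdot+\alpha)+K'(\cdot-\alpha)$ rather than the equation for $G$ against $K'$, and kills the zeroth-order term by a change-of-variables antisymmetry argument where you instead let it cancel after reinserting the ODE; note also that you are (correctly) using the jump convention $K'(0^+)-K'(0^-)=\tfrac{1}{1+\beta^2}$, which is the one consistent with the explicit kernel and with the signs in \eqref{eq:Kp0}--\eqref{eq:Kpa}, rather than the orientation literally printed in \eqref{eq:K}.
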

	\begin{proof}
		We multiply both sides of \eqref{eq:K} by $K'$ in the region $|\tht|>\varepsilon$ to obtain \begin{equation*}
			\begin{split}
				\int_{ |\tht |>\varepsilon} KK' d\tht - 4\bt \int_{ |\tht |>\varepsilon} (K')^2d\tht + (1+\bt^2) \int_{ |\tht |>\varepsilon} K''K' d\tht = 0. 
			\end{split}
		\end{equation*} Using that \begin{equation*}
			\begin{split}
				\int_{ |\tht |>\varepsilon} KK' d\tht = \frac12 \left( K^2(\varepsilon)-K^2(-\varepsilon) \right) \to 0
			\end{split}
		\end{equation*} and \begin{equation*}
			\begin{split}
				\int_{ |\tht |>\varepsilon} K''K' d\tht = \frac12 \left(  (K')^2(\varepsilon) - (K')^2(-\varepsilon) \right) = \frac{1}{1+\bt^2} \frac{ K'(\varepsilon) + K'(-\varepsilon) }{2} \to \frac{1}{1+\bt^2} K'(0)
			\end{split}
		\end{equation*} as $\varepsilon\to0$, we conclude \eqref{eq:Kp0}. The proof of \eqref{eq:Kpa} is similar. We multiply both sides of \eqref{eq:K} by $K'(\tht+\alp)+K'(\tht-\alp)$ and integrate in the region $A:= \bbS\backslash ( \{ |\tht-\alp|<\varepsilon \} \cup \{ |\tht+\alp|<\varepsilon \} )$. Then \begin{equation*}
			\begin{split}
				\int_A K(\tht) (K'(\tht+\alp)+K'(\tht-\alp)) d\tht  = -\int_A K'(\tht) ( K(\tht+\alp) + K(\tht-\alp) ) d\tht + O(\varepsilon),
			\end{split}
		\end{equation*} which shows that \begin{equation*}
			\begin{split}
				2\int_A K(\tht) (K'(\tht+\alp)+K'(\tht-\alp)) d\tht  = O(\varepsilon). 
			\end{split}
		\end{equation*} Similarly, as $\varepsilon\to0$, it can be shown using integration by parts that \begin{equation*}
			\begin{split}
				(1+\bt^2)\int_{A} K''(\tht)(K'(\tht+\alp)+K'(\tht-\alp)) d\tht \rightarrow K'(\alp) + K'(-\alp). 
			\end{split}
		\end{equation*} This finishes the proof. 
	\end{proof}
	
	\subsection{Monotonicity}
	
	\begin{lemma}\label{lem:mon}
		Any $L^p$ solution to \eqref{eq:vort-evo} satisfies \begin{equation}\label{eq:mon}
			\begin{split}
				\frac{d}{dt} \int h d\tht = -8\bt \int (H')^2 d\tht. 
			\end{split}
		\end{equation} For $h = \sum_{i=1}^{N} I_i(t) \dlt(\tht-\tht_i(t)), $ \begin{equation}\label{eq:mon-di}
			\begin{split}
				\frac{d}{dt}  \sum_{i=1}^{N} I_{i}  = - 8\bt \int (H')^2 d\tht
			\end{split}
		\end{equation} where \begin{equation*}
			\begin{split}
				H'(t,\tht) = \sum_{i=1}^{N} I_i(t) K'(\tht-\tht_i(t)).
			\end{split}
		\end{equation*}
	\end{lemma}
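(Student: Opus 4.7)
The plan is to prove the two formulas separately. For the $L^p$ part I would integrate the transport equation \eqref{eq:vort-evo} over $\bbS$ and use the elliptic relation \eqref{eq:elliptic} to eliminate $h$ in favor of $H$; for the atomic part I would compute $\frac{d}{dt}\sum I_j$ from the ODE \eqref{eq:ODE} and then invoke the two kernel identities \eqref{eq:Kp0} and \eqref{eq:Kpa} to rewrite the answer as $-8\beta\int (H')^2\,d\theta$.

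\textbf{The smooth/$L^p$ case.} Since $h$ is periodic, integrating $\rd_t h = -2H\rd_\theta h$ over $\bbS$ and integrating by parts gives
\begin{equation*}
\frac{d}{dt}\int h\,d\theta = -2\int H\,\rd_\theta h\,d\theta = 2\int H' h\,d\theta.
\end{equation*}
Substituting $h = 4H - 4\beta H' + (1+\beta^2)H''$ from \eqref{eq:elliptic},
\begin{equation*}
2\int H' h\,d\theta = 8\int H H'\,d\theta - 8\beta\int (H')^2\,d\theta + 2(1+\beta^2)\int H' H''\,d\theta.
\end{equation*}
The first and third integrals vanish because $HH' = \tfrac12(H^2)'$ and $H'H'' = \tfrac12((H')^2)'$ are exact derivatives of periodic functions, which yields \eqref{eq:mon}. (For merely $L^p$ data one mollifies and passes to the limit; the propagation of the relation is automatic from the $L^1$ continuity of $h$ and the $W^{2,1}$ continuity of $H$.)

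\textbf{The atomic case.} From the ODE \eqref{eq:ODE}, $\dot I_j = 2\rd_\theta H(t,\theta_j)\,I_j$, so
\begin{equation*}
\frac{d}{dt}\sum_{j} I_j = 2\sum_{j} I_j \rd_\theta H(t,\theta_j) = 2\sum_{i,j} I_i I_j\,K'(\theta_j-\theta_i),
\end{equation*}
where $K'(0)$ is understood in the principal-value sense of \eqref{eq:Dirac-Hprime-explicit}. I would split the double sum into the diagonal $i=j$ and the symmetrized off-diagonal contribution:
\begin{equation*}
2\sum_{i,j}I_iI_jK'(\theta_j-\theta_i) = 2\sum_j I_j^2 K'(0) + 2\sum_{i<j} I_iI_j\bigl(K'(\theta_j-\theta_i)+K'(\theta_i-\theta_j)\bigr).
\end{equation*}
Applying \eqref{eq:Kp0} to the diagonal and \eqref{eq:Kpa} (with $\alpha=\theta_j-\theta_i$) to each off-diagonal pair converts the right-hand side into
\begin{equation*}
-8\beta\int K'(\phi)\Bigl[\sum_j I_j^2 K'(\phi) + \sum_{i<j}I_iI_j\bigl(K'(\phi+\theta_i-\theta_j)+K'(\phi+\theta_j-\theta_i)\bigr)\Bigr]d\phi.
\end{equation*}
On the other hand, expanding $(H')^2=\sum_{i,j}I_iI_j K'(\phi-\theta_i)K'(\phi-\theta_j)$, performing the translation $\phi\mapsto\phi+\theta_i$ in each off-diagonal term, and symmetrizing in $(i,j)$ produces exactly the same bracket, so the two integrals coincide and \eqref{eq:mon-di} follows.

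\textbf{Main obstacle.} The only subtlety is the atomic case: because $K\notin C^1(\bbS)$, ordinary integration by parts is unavailable and one cannot simply integrate $(H')^2$ against a delta. The substantive work is therefore entirely front-loaded into the kernel identities \eqref{eq:Kp0} and \eqref{eq:Kpa}, which play the role that integration by parts plays in the smooth case; the rest is bookkeeping to match the diagonal and symmetrized off-diagonal pieces with the expansion of $\int(H')^2\,d\theta$.
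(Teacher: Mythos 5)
Your proposal is correct and follows essentially the same route as the paper: integration by parts plus substitution of the elliptic relation \eqref{eq:elliptic} for the smooth/$L^p$ case, and the kernel identities \eqref{eq:Kp0} and \eqref{eq:Kpa} applied to the diagonal and symmetrized off-diagonal parts of the double sum for the atomic case. Your diagonal/off-diagonal bookkeeping is spelled out more explicitly than in the paper, but the underlying argument is identical.
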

	\begin{proof}
		Assuming that $h$ is smooth, 
		\begin{equation*}
			\begin{split}
				\frac{d}{dt} \int h d\tht & = -2 \int H\rd_\tht h d\tht = 2\int  H' ( 4H - 4\bt  H' + (1+\bt^2) H'' ) d\tht \\
				& = -8\bt \int (H'')^2 d\tht + \int \rd_\tht \left(  4H^2 + (1+\bt^2) (H')^2  \right) d\tht  = -8\bt \int (H'')^2 d\tht. 
			\end{split}
		\end{equation*} The case of $h\in L^p$ follows from an approximation argument. In the case of Dirac deltas, we have 
		\begin{equation*}
			\begin{split}
				\frac12 \frac{d}{dt} \sum_{i=1}^{N} I_{i} = \sum_{i,j} \int I_i(t) I_j(t) K'(\tht_i(t)-\tht_j(t)). 
			\end{split}
		\end{equation*} On the other hand, \begin{equation*}
			\begin{split}
				-4\bt \int (\partial^2_{\theta} H)^2 d\tht & = -4\bt \sum_{i,j} \int I_i(t)I_j(t) K'(\tht-\tht_j(t))K'(\tht-\tht_i(t)) d\tht . 
			\end{split}
		\end{equation*} Then \eqref{eq:mon-di} follows from \eqref{eq:Kp0} and \eqref{eq:Kpa}. 
	\end{proof}

	\section{Well-posedness issues}\label{sec:wp}
	
	\subsection{Proof of well-posedness results}

In this section, we give the proof of existence and uniqueness. We emphasize that the uniqueness statement we prove is uniqueness only among the solutions  in the class of logarithmic spiral vortices.

	\begin{proof}[Proof of Theorem \ref{thm:wp-Lp}]
		
		We divide the proof into three parts. 
		
		\medskip
		
		\noindent \textbf{1. Local existence in $L^p$}. We obtain a priori estimates in $L^p$ for any $p\ge1$. For this, assume that we are given a smooth solution $h, H$ to \eqref{eq:vort-evo}--\eqref{eq:elliptic}. Then, \begin{equation}\label{eq:drv lp norm}
			\begin{split}
				\frac{d}{dt} \int |h|^p d\tht &= -p \int |h|^{p-1} (2H h') \sgn(h) d\tht  = \int 2H' |h|^p d\tht \le 2\nrm{H'}_{L^\infty} \int |h|^p d\tht. 
			\end{split}
		\end{equation} Using that $\nrm{H'}_{L^\infty} \le C\nrm{h}_{L^p}$ holds for any $p\ge1$, \begin{equation*}
			\begin{split}
				\frac{d}{dt} \nrm{h}_{L^p} \le \frac{C}{p} \nrm{h}_{L^p}^2. 
			\end{split}
		\end{equation*} In the case $p=\infty$, we obtain \begin{equation*}
			\begin{split}
				\frac{d}{dt} \nrm{h}_{L^\infty} = 0. 
			\end{split}
		\end{equation*} This gives that $\nrm{h}_{L^\infty([0,T^*];L^p)} \le 2\nrm{h_0}_{L^p}$ for $T^*>0$ depending only on $p$ and $\nrm{h_0}_{L^p}$. Based on this a priori estimate, proving the existence of an $L^p$ solution can be done by the method of mollification. Given any initial data $h_0 \in L^p$, consider the sequence of mollified data $h_0^{\varep}$ converging to $h_0$ in $L^p$. For each $h_0^\varep$, one can construct a corresponding local smooth solution $(h^\varep,H^\varep)$ to  \eqref{eq:vort-evo}--\eqref{eq:elliptic}, for example by an iteration scheme. The sequence of solutions $h^\varep$ remains smooth in the time interval $[0,T^*]$ and satisfies the uniform bound $\nrm{h^\varep}_{L^\infty([0,T^*];L^p)} \le 2\nrm{h_0}_{L^p}$. Appealing to the Aubin--Lions lemma, this gives a weak-$L^p$ limit $h \in L^\infty([0,T^*];L^p)$ as $\varep\to0$, by passing to a subsequence if necessary. The corresponding limit  $H^\varep\to H$ is strong in $W^{1,p}$, and this shows that $(h,H)$ gives a weak solution to \eqref{eq:vort-evo}--\eqref{eq:elliptic}. (This argument is parallel, and only easier, compared to the well-known proof of existence for $L^p$ vorticity weak solutions to 2D incompressible Euler equations, see \cite{MB}.)

		\medskip
		
		\noindent \textbf{2. Uniqueness in $L^1$}.  We prove that there is at most one solution in the class $L^\infty([0,T];L^1)$ of \textit{logarithmic spiraling solutions}. The case $p>1$ is only easier. Given an initial datum $h_0 \in L^1$, we assume that there are two associated solutions $h$ and $\tilde{h}$ belonging to $L^\infty([0,T];L^1)$ for some $T>0$. We denote $H = \bfK h$ and $\tilde{H} = \bfK\tilde{h}$. By taking $\bfK$ to the equation for $h$, we may derive the evolution equations satisfied by $H$ and $H'$: \begin{equation}\label{eq:H}
			\begin{split}
				\rd_tH + 2H \rd_\tht H = -\bfK(8\bt - 3(1+\bt^2)\rd_\tht)\left[ (H')^2  \right]
			\end{split}
		\end{equation}\begin{equation}\label{eq:H-prime}
			\begin{split}
				\rd_t H' + 2H \rd_\tht H' = (H')^2 - 4 \bfK (3-\bt\rd_\tht)\left[ (H')^2 \right]. 
			\end{split}
		\end{equation} Denoting $D = (H-\tilde{H}) $, we see that it satisfies \begin{equation}\label{eq:D}
			\begin{split}
				\rd_t D + 2H \rd_\tht D + 2D\tilde{H}' =   - \bfK(8\bt -3(1+\bt^2)\rd_\tht) \left[  (H'+ \tilde{H}')D' \right].
			\end{split}
		\end{equation} Here, $H', \tilde{H}' \in L^\infty([0,T];W^{1,1}) \subset L^\infty([0,T];C^0)$. The operator $  - \bfK(8\bt -3(1+\bt^2)\rd_\tht) $ is convolution type with a bounded kernel. We consider the estimate of $\bfK\rd_\tht (g D')$, where $g$ is a $W^{1,1}$ function: \begin{equation*}
			\begin{split}
				\bfK\rd_\tht (g D')(\tht) = \int K'(\tht-\tht') g(\tht') D'(\tht') d\tht'. 
			\end{split}
		\end{equation*} Note that for any $\tht$, $K'(\tht-\tht') g(\tht')$ is differentiable in the sense of distributions with respect to $\tht'$, with \begin{equation*}
			\begin{split}
				\rd_{\tht'} \left( K'(\tht-\tht') g(\tht') \right) = K'(\tht-\tht') g'(\tht') - \frac{\dlt(\tht-\tht')}{1+\bt^2}g(\tht') + \frac{4\bt}{1+\bt^2}K'(\tht-\tht')g(\tht') - \frac{4}{1+\bt^2}K(\tht-\tht') g(\tht'). 
			\end{split}
		\end{equation*} Since $g$ and $D$ are continuous functions, this allows us to rewrite \begin{equation*}
			\begin{split}
				\bfK\rd_\tht (g D')(\tht) = \int A(\tht-\tht')g'(\tht')D(\tht')d\tht' + \frac{1}{1+\bt^2} g(\tht) D(\tht),
			\end{split}
		\end{equation*} where $A$ is a bounded function. In particular, we obtain the estimate \begin{equation*}
			\begin{split}
				\nrm{ \bfK\rd_\tht (g D') }_{L^2} \le C\nrm{g}_{L^\infty} \nrm{D}_{L^2}. 
			\end{split}
		\end{equation*} Similarly, \begin{equation*}
			\begin{split}
				\nrm{ \bfK (g D') }_{L^2} \le C\nrm{g}_{L^\infty} \nrm{D}_{L^2}. 
			\end{split}
		\end{equation*} We then estimate, by multiplying \eqref{eq:D} against $D$ and integrating, \begin{equation*}
			\begin{split}
				\frac{d}{dt} \nrm{D}_{L^2}^2 &\le C( \nrm{H'}_{L^\infty} + \nrm{\tilde{H}}_{L^\infty} ) \nrm{D}_{L^2}^2 + \nrm{     \bfK(8\bt -3(1+\bt^2)\rd_\tht) ( (H'+ \tilde{H}')D') }_{L^2} \nrm{D}_{L^2} \\
				& \le C( \nrm{h}_{L^1} + \nrm{\tilde{h}}_{L^1} ) \nrm{D}_{L^2}^2. 
			\end{split}
		\end{equation*} Integrating in time, we obtain \begin{equation*}
			\begin{split}
				\nrm{D}_{L^2}^2 \le \nrm{D_0}_{L^2}^2 \exp\left(  C( \nrm{h}_{L^\infty_t L^1} + \nrm{\tilde{h}}_{L^\infty_t L^1} ) \right)
			\end{split}
		\end{equation*} which gives uniqueness.

		\medskip
		
		\noindent \textbf{3. Blow-up criterion and global existence in $L^\infty$}. From the $L^p$ a priori estimate, we have that \begin{equation*}
			\begin{split}
				\nrm{h(t,\cdot)}_{L^p} \le \nrm{h_0}_{L^p} \exp\left( \frac{2}{p} \int_0^t \nrm{H'(\tau,\cdot)}_{L^\infty} d\tau \right). 
			\end{split}
		\end{equation*} Therefore, the local unique $L^p$ solution blows up in $L^p$ at $T^*$ if and only if \begin{equation}\label{eq:BKM}
			\begin{split}
				\int_0^{T^*}\nrm{H'(\tau,\cdot)}_{L^\infty} d\tau=\infty.
			\end{split}
		\end{equation} Then for each time, $\nrm{H'(\tau,\cdot)}_{L^\infty}$ is equivalent to $\nrm{h(\tau,\cdot)}_{L^1}$, from which the claimed blow-up criterion follows. In the case $p=\infty$, we have that as long as the solution exists, $\nrm{h(t,\cdot)}_{L^\infty} = \nrm{h_0}_{L^\infty}$. In turn, this gives $\nrm{H'(t,\cdot)}_{L^\infty} \le C\nrm{h_0}_{L^\infty}$. This implies that the $L^\infty$ solutions are global in time, in view of \eqref{eq:BKM}. 
	\end{proof}
	
	\subsection{Global well-posedness}

 \begin{proof}[Proof of Theorem \ref{thm:gwp}]
     The goal is to show the propagation of the constraint $\lim_{p\rightarrow+\infty}\frac{\left \Vert h(t,\cdot)\right\Vert_{L^p}}{p}<+\infty$ for all $t$. Starting back from \eqref{eq:drv lp norm} we get for $1\leq p<+\infty$
	\[
	\frac{d}{dt}\left\Vert h\right\Vert_{L^{p}}\leq \frac{2}{p}\left\Vert H'\right\Vert_{L^{\infty}}\left\Vert h\right\Vert_{L^{p}}.
	\]
	By the Gronwall lemma we get
	\[
	\left\Vert h(t,\cdot)\right\Vert_{L^{p}}\leq e^{\frac{2\int_0^t\left\Vert H'\right\Vert_{L^{\infty}}}{p}}\left\Vert h_0\right\Vert_{L^{p}},
	\]
	thus for $p$ such that $\frac{2\int_0^t\left\Vert H'\right\Vert_{L^{\infty}}}{p}\leq \ln(2)$ we have 
	\[
	\left\Vert h(t,\cdot)\right\Vert_{L^{p}}\leq 2 \left\Vert h_0\right\Vert_{L^{p}}\leq 2C_0 p.
	\] 
	Now, so long as the solution exists, choose \[p=\max\{1,4\int_0^t \left\Vert H'\right\Vert_{L^\infty}\}.\] Note that \[\left\Vert H' \right\Vert_{L^\infty}\leq C \left\Vert h\right\Vert_{L^1},\qquad \left\Vert h\right\Vert_{L^1}\leq C\left\Vert h\right\Vert_{L^{p}},\] for any $p\geq 1$ and for $C>0$ universal. 
	It follows then that 
	\[\left\Vert h(t,\cdot)\right\Vert_{L^1}\leq C \max\{1,\int_{0}^t\left\Vert h\right\Vert_{L^1}\}.\]
	It now follows by the Gronwall Lemma that 
	\[\left\Vert h(t,\cdot)\right\Vert_{L^1}\leq C\exp(Ct).\]
	which gives the desired result on the bound on the constraint and global existence.
 \end{proof}

	\subsection{Convergence to logarithmic spiral sheets}\label{sec:cvg moll}
	
	\begin{proof}[Proof of Theorem \ref{thm:wp-Dirac}]
		Let $h_0 = \sum_{j\ge0} I_{j,0}\dlt_{\tht_{j,0}}$ satisfy the assumptions of Theorem \ref{thm:wp-Dirac}. Now let us note that under those hypothesis that there exists $\eta\ll1 $ such $H'$ is smooth on all of the intevals $\left(\theta_j(0)-\eta,\theta_j(0)+\eta\right)$ and thus the standard Cauchy-Lipschitz theorem can be applied to guarantee the existence of a unique $h$ corresponding to the local in time solution given by \eqref{eq:ODE}. We let $h^\varep$ as the sequence of smooth and global-in-time solutions with mollified initial data $h^\varep_0 := \varphi^\varep * h_0$.

		To begin with, note that as long as the solution to \eqref{eq:ODE} does not blow up, we have $\tht_{j}(t) \ne \tht_{k}(t)$ for $j\ne k$. We fix such a time interval $[0,T]$ and then for each $j$ we can take some $\eta_{j}>0$ such that $h(t,\cdot) \mathbf{1}_{A_{j}(t)} = I_{j}(t) \dlt_{\tht_j(t)}$, where $A_{j}(t):= (\tht_{j}(t) - \eta_{j}, \tht_{j}(t)+\eta_{j})$ is the open interval.  
		
		Since the sequence of data $h_0^{\varep}$ is uniformly $L^1$, from the $L^1$ estimate given in the previous section, we have that on $[0,T]$ (by taking $T>0$ smaller if necessary) $h^\varep(t,\cdot)$ is uniformly $L^1$ in $t$ and $\varep$, and $H^{\varep}(t,\cdot)$ is uniformly Lipschitz continuous in $t$ and $\varep$. (Here, all the relevant norms are bounded in terms of $\nrm{h_0}_{\calD}$.) 
		
		Note that $\supp(h_0^{\varep}) \subset B( \supp(h_0), C\varep )$ where we define for simplicity $B(K,\eta)$ as the $\eta$-neighborhood of some set $K$. For each fixed $j$, by taking $\varep>0$ sufficiently small, we can ensure that $\supp(h_0^{\varep})$ has a unique connected component intersecting $A_{j}(t=0)$. By continuity, there is a small time interval on which $\supp(h^\varep(t,\cdot))$ still has this property for all sufficiently small $\varep>0$. Then, the quantities $\{ \tht_{j}^\varep(t), I_{j}^{\varep}(t) \}$ are well-defined by the following equations:  \begin{equation*}
			\begin{split}
				\tht_{j}^{\varep}(t) \int_{A_{j}(t)} h^\varep(t,\tht) d\tht = \int_{A_{j}(t)} \tht h^\varep(t,\tht) d\tht , \quad \mbox{and} \quad I_{j}^{\varep}(t) =  \int_{A_{j}(t)} h^\varep(t,\tht) d\tht. 
			\end{split}
		\end{equation*} Note that uniform Lipschitz continuity of $H^\varep$ implies that the length of $\supp(h^\varep)\cap A_{j} \le C\varep$ with $C$ depending only on the Lipschitz norm. This in particular guarantees that \begin{equation*}
			\begin{split}
				\int_{A_j(t)} |\tht - \tht_{j}^{\varep}(t)| h^\varep(t,\tht) d\tht \le C\varep I_{j}^{\varep}(t),
			\end{split}
		\end{equation*} simply because $\tht_{j}^{\varep}(t) \in \supp(h^\varep(t,\cdot)) \cap A_{j}$. 
		Now, differentiating in time the above relations (and using that $h^\varep$ is smooth and vanishes on $\partial A_{j}$), \begin{equation}\label{eq:limit-1}
			\begin{split}
				\frac{d}{dt} I_{j}^{\varep}(t) = \int_{A_{j}(t)} 2 (H')^\varep(t,\tht) h^\varep(t,\tht) d\tht, 
			\end{split}
		\end{equation}
		\begin{equation}\label{eq:limit-2}
			\begin{split}
				\tht_{j}^{\varep}(t)\frac{d}{dt} I_{j}^{\varep}(t) +I_{j}^{\varep}(t)  \frac{d}{dt} \tht_{j}^{\varep}(t)= \int_{A_{j}(t)} 2 H^\varep(t,\tht) h^\varep(t,\tht) d\tht + \int_{A_{j}(t)} 2\tht (H')^\varep(t,\tht) h^\varep(t,\tht) d\tht .
			\end{split}
		\end{equation} We use \eqref{eq:limit-1} to rewrite \eqref{eq:limit-2} as follows: \begin{equation*}
			\begin{split}
				\frac{d}{dt} \tht_{j}^{\varep}(t) & = 2H^\varep(t, \tht_{j}^{\varep}(t) ) + \frac{2}{I_{j}^{\varep}(t)   } \int_{A_{j}(t)} ( H^\varep(t,\tht) - H^\varep(t,\tht_{j}^{\varep}(t) ) ) h^\varep(t,\tht) d\tht \\
				&\qquad  + \frac{2}{I_{j}^{\varep}(t) }\int_{A_{j}(t)} (\tht - \tht^\varep_j(t))(H')^\varep(t,\tht) h^\varep(t,\tht) d\tht. 
			\end{split}
		\end{equation*} Estimating \begin{equation*}
			\begin{split}
				\left| \int_{A_{j}(t)} ( H^\varep(t,\tht) - H^\varep(t,\tht_{j}^{\varep}(t) ) ) h^\varep(t,\tht) d\tht\right| \le \nrm{ (H')^\varep }_{L^\infty}  \int_{A_{j}(t)} |\tht -\tht_{j}^{\varep}(t) |h^\varep(t,\tht) d\tht \le C\varep I_j^{\varep}(t)
			\end{split}
		\end{equation*} and similarly the other term, we obtain that \begin{equation}\label{eq:ODE-tht-varep}
			\begin{split}
				\left|  \frac{d}{dt} \tht_{j}^{\varep}(t) - 2H^{\varep}(t, \tht_{j}^{\varep}(t) ) \right| \le C\varep. 
			\end{split}
		\end{equation} 
		
		Let us now derive a similar estimate for \eqref{eq:limit-1}. To this end, we first decompose $H^\varep = H^\varep_j + H^\varep_{\ne j}$; $H^{\varep}_j$ is simply defined as the solution to \eqref{eq:elliptic} with right hand side $h^\varep_j := h^\varep \mathbf{1}_{A_{j}}$. We write \begin{equation*}
			\begin{split}
				\int_{A_{j}(t)} 2 (H')^\varep(t,\tht) h^\varep(t,\tht) d\tht = \int_{A_{j}(t)} 2 (H')^\varep_{j}(t,\tht) h^\varep(t,\tht) d\tht +\int_{A_{j}(t)} 2 (H')^\varep_{\ne j}(t,\tht) h^\varep(t,\tht) d\tht . 
			\end{split}
		\end{equation*} Then note that $H^\varep_{\ne j}$ is smooth on $A_{j}$ from the support property, and this gives by writing $(H')^\varep_{\ne j}(t,\tht) = (H')^\varep_{\ne j}(t,\tht_j^\varep) + O(|\tht - \tht_j^\varep|)$  \begin{equation*}
			\begin{split}
				\int_{A_{j}(t)} 2 (H')^\varep_{\ne j}(t,\tht) h^\varep(t,\tht) d\tht  = 2 (H')^\varep_{\ne j}(t,\tht^\varep_j(t)) I^{\varep}_j(t) + O(\varep). 
			\end{split}
		\end{equation*} Next, we have that \begin{equation}\label{eq:sym}
			\begin{split}
				\int_{A_{j}(t)} 2 (H')^\varep_{j}(t,\tht) h^\varep(t,\tht) d\tht = \int_{\bbS} 2 (\rd_\tht K*h^\varep_j) (\tht) h^\varep_j(\tht) d\tht 
			\end{split}
		\end{equation} where $K$ is the kernel for the elliptic problem \eqref{eq:elliptic} (see Appendix for its explicit form). We consider $K$ as defined on $[-\pi,\pi]$, and then we can decompose $K= K^e + K^o$ where $K^e$ and $K^o$ are even and odd parts of $K$ around $\tht = 0$, respectively. Using the ODE satisfied by $K$, we can derive the relation \begin{equation*}
			\begin{split}
				(1+\bt^2) (K^o)'' - 4\bt (K^e)' + K^o = 0, 
			\end{split}
		\end{equation*} and since $(K^e)' \in L^\infty(\bbS)$, we have that $K^o \in W^{2,\infty}(\bbS)$ (we only need it to be strictly better than Lipschitz). Alternatively, $K^o \in W^{2,\infty}(\bbS)$ can be checked using the explicit formula \eqref{eq:cln K} given in the Appendix. Returning to \eqref{eq:sym} and observing that $(K^o)'$ and $(K^e)'$ are even and odd respectively, \begin{equation*}
			\begin{split}
				\int_{\bbS} 2 (\rd_\tht K*h^\varep_j) (\tht) h^\varep_j(\tht) d\tht  & = \iint_{\bbS\times\bbS} 2 (K^o + K^e)'(\tht-\tht') h^\varep_j (\tht) h^\varep_j (\tht') d\tht d\tht' \\
				& = \iint_{\bbS\times\bbS} 2 (K^o)'(\tht-\tht') h^\varep_j (\tht) h^\varep_j (\tht') d\tht d\tht' \\
				& = \int_{\bbS} 2 (\rd_\tht K^o *h^\varep_j) (\tht) h^\varep_j(\tht) d\tht  
			\end{split}
		\end{equation*} which can be estimated as \begin{equation*}
			\begin{split}
				& = \int_{\bbS} 2 (\rd_\tht K^o *h^\varep_j) (\tht^\varep_j(t)) h^\varep_j(\tht) d\tht + \int_{\bbS} 2( (\rd_\tht K^o *h^\varep_j) (\tht) - (\rd_\tht K^o *h^\varep_j) (\tht^\varep_j(t))) h^\varep_j(\tht) d\tht \\
				& = 2(\rd_\tht K^o *h^\varep_j) (\tht^\varep_j(t)) I^\varep_j(t) + O(\varep),
			\end{split}
		\end{equation*} where we have used that $h^\varep_j$ is $L^1$ uniformly in $\varep $ and $\rd_\tht K^o$ is Lipschitz. This gives that \begin{equation}\label{eq:ODE-I-varep}
			\begin{split}
				\left| \frac{d}{dt} I^\varep_{j} (t)  - 2 (\tilde{H}')^\varep_{j}(t, \tht^\varep_{j}(t)) I^\varep_{j} (t) \right| \le C\varep,
			\end{split}
		\end{equation} where by definition, \begin{equation*}
			\begin{split}
				(\tilde{H}')^\varep_{j} := (H')^\varep_{\ne j} + \rd_\tht K^o *h^\varep_j,
			\end{split}
		\end{equation*} which is Lipschitz continuous in $A_{j}$. Note that under the assumption (which we can bootstrap upon) of $h^\varep \to h$ in the sense of distributions, we have $( \tilde{H}')^\varep_{j}(t,\tht^\varep_{j}(t)) \to \partial_{\theta} H(t,\tht_{j}(t))$ where the latter is defined in \eqref{eq:Dirac-Hprime-explicit}. 
		
		Using this observation together with \eqref{eq:ODE-tht-varep}--\eqref{eq:ODE-I-varep} and noting that $I^{\varep}_{j}(t=0) = I_{j,0}$, $\tht^{\varep}_{j}(t=0) = \tht_{j,0}$ for all $\varep$ sufficiently small (possibly depending on $j$) gives that for any fixed $N\ge1$ and some $T>0$ small, we have convergence $\{ I^{\varep}_{j}(t), \tht^{\varep}_{j}(t) \} \to \{ I_{j}(t), \tht_{j}(t) \}$ for $t \in [0,T]$ and $j\le N$. This gives the desired convergence in the sense of distributions. 
	\end{proof}

	\subsection{Proof of Proposition \ref{prop:back-to-2D}}
	The goal is to show that $h\in C_*\left([0,T);\mathcal{M}\left(\mathbb{S}\right)\right)$ solution of \eqref{eq:vort-evo}--\eqref{eq:elliptic} defines a weak solution to the 2d Euler equations through \eqref{eq:vort-homog-spi}-\eqref{eq:vel-homog} in velocity forms. For $h\in C\left([0,T);L^1\left(\mathbb{S}\right)\right)$, we show moreover that $h$ defines a weak solution in vorticity form.
	\subsubsection{In vorticity form for $\boldsymbol{h_0\in L^1\left(\mathbb{S}\right)}$}
	To do so we write the 2d Euler equations \eqref{eq:2DEuler-vort} in polar coordinates 
	 \begin{equation}\label{eq:vorticity-strong}
	\partial_t \omega+u^r\partial_r \omega+\frac{1}{r}u^\theta \partial_\theta \omega=0.
	\end{equation} 
    To begin with, assume that $\omg(t,r,\theta)=h(t,\theta-\beta\ln r)$ with $h \in C^{1}(\bbS)$. Then $H, H' \in C^{1}(\bbS)$ as well, and in particular $\omg$ and $u$ are $C^{1}(\bbR^{2}\backslash\{0\})$, where we recall that \begin{equation}\label{eq:vel}
        \begin{split}
            u(t,r,\theta)=  -r H'(t,\theta -\beta \ln (r)) e^{r} + (
		2r H(t,\theta -\beta \ln (r))-r\beta H'(t,\theta -\beta \ln (r)) ) e^{\tht}.
        \end{split}
    \end{equation} Using the relation between $h$ and $H$, we see that a solution of \eqref{eq:vort-evo}--\eqref{eq:elliptic} with $h \in C^{1}(\bbS)$ solves \eqref{eq:vorticity-strong} pointwise, except at the origin. To deal with this problem we can just directly use the definition of a weak solution to \eqref{eq:vorticity-strong}: it should solve for any test function $\phi\in C^{\infty}_c\left(I\times \mathbb{R}^2\right)$ the identity 
	\[
	\int\limits_{(0,T)\times\mathbb{R}^2}\omega \partial_t\phi+\omega\underbrace{\left(\partial_r u^r+\frac{1}{r}\partial_\theta u^\theta\right)}_{=0}\phi+\omega u^r\partial_r \phi+\frac{1}{r}u^\theta\omega \partial_\theta \phi=-\int\limits_{\mathbb{R}^2}\omega_0\phi_0.
	\]
    To verify this, one simply rewrite the integral as the limit \[
	\lim_{\eps\to 0^+} \int\limits_{(0,T)\times(\mathbb{R}^2 \backslash B_{0}(\eps)) }\omega \partial_t\phi+\omega u^r\partial_r \phi+\frac{1}{r}u^\theta\omega \partial_\theta \phi\]
    which is possible thanks to uniform boundedness (in $\eps$) of the integrand in space and time. Then we simply integrate by parts in space: the boundary integral terms vanish as we take $\eps\to 0^+$ since $\rd_r(\omg u^r)$ and $r^{-1}\rd_\tht(\omg u^\tht)$ are again uniformly bounded in space and time. 

    In the case of $\omg(t,r,\theta)=h(t,\theta-\beta\ln r)$ with $h \in C\left([0,T], L^{1}(\bbS)\right)$, we note that $H \in  C\left([0,T], W^{2,1}(\bbS)\right)$ and  $H' \in  C\left([0,T], W^{1,1}(\bbS)\right)$. In particular, from \eqref{eq:vel} we see that $u\in  C\left([0,T], C^0(\mathbb{R}^2)\right)$. Now, to verify that it is a weak solution in $\bbR^{2}$, one can mollify $h, H, H'$ in $\theta$ to reduce to the case of $h \in C^1(\bbS)$, while the error terms coming from mollification are small thanks to the $L^1$ bound of $h$.

\subsubsection{In velocity form for $\boldsymbol{h_0\in \mathcal{M}\left(\mathbb{S}\right)}$}

When $h$ is merely a measure, we need to use the velocity formulation to prove that the associated velocity defines a weak solution in $\bbR^2$. We write the 2d Euler equations on the velocity in polar coordinates 
	\[
	\begin{cases}
		\partial_t u^r+u^r\partial_ru^r+\frac{1}{r}u^\theta \partial_\theta u^r=-\partial_r p\\
		\partial_t u^\theta+u^r\partial_ru^\theta+\frac{1}{r}u^\theta \partial_\theta u^\theta=-\frac{1}{r}\partial_\theta p. 
	\end{cases}
	\]
	Thus a weak solution of the 2d Euler equation on an interval $[0,T)\subset \mathbb{R}$ solves for $(\phi^r,\phi^\theta)\in C^{\infty}_c\left(I\times \mathbb{R}^2\right)$
	\begin{equation}\label{eq: weak euler}
	\begin{cases}
		\int\limits_{(0,T)\times\mathbb{R}^2}u^r \partial_t\phi^r+(u^r)^2 \partial_r \phi^r+\frac{1}{r}u^\theta u^r \partial_\theta \phi^r=-\int\limits_{\mathbb{R}^2}u^r_0\phi^r_0-\int\limits_{(0,T)\times\mathbb{R}^2} p \partial_r \phi^r,\\
		\int\limits_{(0,T)\times\mathbb{R}^2}u^\theta \partial_t\phi^\theta+u^r u^\theta \partial_r \phi^\theta+\frac{1}{r}(u^\theta)^2 \partial_\theta \phi^\theta=-\int\limits_{\mathbb{R}^2}u^\theta_0\phi^\theta_0-\int\limits_{(0,T)\times\mathbb{R}^2} p \frac{1}{r}\partial_\theta \phi^\theta,
	\end{cases}
	\end{equation}
	where we again used the incompressibility condition $\partial_r u^r+\frac{1}{r}\partial_\theta u^\theta=0$.

 To proceed, we need to compute the form of the pressure under logarithmic spiral symmetry. The ansatz for the pressure is $p= r^2 P(t,\tht-\bt\ln r)$ and we  compute the gradient in polar \begin{equation*}
	    \begin{split}
	        \nb p = (2rP-\beta r P') e^{r} +  r P'  e^{\tht}
	    \end{split}
	\end{equation*} which gives 
 \[ -2rP=\partial_t \left(u^r+\beta u^\theta\right)+u^r\partial_r\left(u^r+\beta u^\theta\right)+\frac{1}{r}u^\theta \partial_\theta \left(u^r+\beta u^\theta\right).
 \]
 Now $u^r+\beta u^\theta=r\left(2\beta H-(1+\beta^2) H' \right)$ thus from \eqref{eq:H}--\eqref{eq:H-prime} we get
 \begin{align*}2P&=-4\beta H H' -2\beta\bfK(8\bt - 3(1+\bt^2)\rd_\tht)\left[ (H' )^2  \right]+(1+\beta^2)\left( 2H H''- (H' )^2 + 4 \bfK (3-\bt\rd_\tht)\left[ (H' )^2 \right]\right)\\
&-H'  \left(2\beta H+(1+\beta^2)\beta H'' -(1+3\beta^2)H' \right)+\left(2H-\beta H' \right)\left(2\beta  H' -\left(1+\beta^2\right) H''\right)\end{align*}
in particular we observe that the worst term $(1+\beta^2)\beta  H' H''$, which is not well defined in the case of $h\in \mathcal{D}\left(\mathbb{S}\right)$ cancels out, and we get
\[2P=-2\beta\bfK(8\bt - 3(1+\bt^2)\rd_\tht)\left[ ( H' )^2  \right]+(1+\beta^2)\left( 4 \bfK (3-\bt\rd_\tht)\left[ ( H' )^2 \right]\right)
     - H'  \left(2\beta H-\beta^2 H' \right).\] Recall that $h \in \calD(\bbS)$ gives $H \in W^{1,\infty}(\bbS)$ and $H' \in L^\infty(\bbS)$. Using this (with regularizing property of $\bbK \rd_\tht$), we have in particular for $h\in C_*\left([0,T);\mathcal{D}\left(\mathbb{S}\right)\right)$ that $P\in C_*\left([0,T);L^\infty\left(\mathbb{S}\right)\right)$.

\medskip 

     \noindent We now proceed to the proof that $h$ provides a weak solution to \eqref{eq: weak euler}. To do so, we  consider $h^\epsilon$ as the mollification of our solution given in Section \ref{sec:cvg moll}. Let $u^\epsilon$ and $p^\epsilon$ be the respective velocity and pressure associated to $h^\epsilon$. As they are $W^{1,\infty}$ solutions (and actually smooth outside of $0$) to the 2d Euler equation they solve
     \[
    \begin{cases}		\int\limits_{(0,T)\times\mathbb{R}^2}\left(u^r \right)^\epsilon\partial_t\phi^r+\left((u^r)^\epsilon\right)^2 \partial_r \phi^r+\frac{1}{r}(u^\theta)^{\epsilon} (u^r)^{\epsilon} \partial_\theta \phi^r=-\int\limits_{\mathbb{R}^2}(u^r_0)^{\epsilon}\phi^r_0-\int\limits_{(0,T)\times\mathbb{R}^2} p^\epsilon \partial_r \phi^r,\\
		\int\limits_{(0,T)\times\mathbb{R}^2}(u^\theta)^{\epsilon} \partial_t\phi^\theta+(u^r)^{\epsilon} (u^\theta)^{\epsilon} \partial_r \phi^\theta+\frac{1}{r}\left((u^\theta)^{\epsilon}\right)^2 \partial_\theta \phi^\theta=-\int\limits_{\mathbb{R}^2}(u^\theta_0)^\epsilon \phi^\theta_0-\int\limits_{(0,T)\times\mathbb{R}^2} p^\epsilon \frac{1}{r}\partial_\theta \phi^\theta.
	\end{cases}
	\] This can be proved along the lines of the proof that $h \in C^1(\bbS)$ gives rise to weak solutions to the vorticity equation. 
 Now, by the results of Section \ref{sec:cvg moll} $h^\epsilon$ converges to $h$ in the sense of measures. This shows that $(H'')^\epsilon$ again converges to $H''$ in the sense of measures (the same holds for $H$ and $H'$ for a stronger reason), giving that $u^\epsilon \to u$ and $p^\epsilon \to p$ in the sense of measures, where $u$ and $p$ are the velocity and pressure generated from $h$. In particular, $u$ and $p$ verify \eqref{eq: weak euler} by passing to the limit $\eps\to 0$ in the previous identities. This finishes the proof. \qedsymbol

	\section{Long time dynamics and singularity formation}\label{sec:longtime}

	\subsection{Convergence for bounded solutions}

 \begin{proof}[Proof of Theorem \ref{thm:conv}]
	Recall that we are assuming $\bt>0$. For $h_0 \in L^\infty$, we have that for all $t>0$, the solution $h(t,\cdot)$ satisfies \begin{equation*}
		\begin{split}
			-\nrm{h_0}_{L^\infty} \le h(t,\tht) \le \nrm{h_0}_{L^\infty}. 
		\end{split}
	\end{equation*} In particular, \begin{equation*}
		\begin{split}
			I(t) := \int h(t,\tht) d\tht \ge -2\pi\nrm{h_0}_{L^\infty}  
		\end{split}
	\end{equation*} and since the left hand side is strictly decreasing in time (unless $h_0$ is a constant), \begin{equation*}
		\begin{split}
			I(t) \longrightarrow \bfI_{+}
		\end{split}
	\end{equation*} for some constant $\bfI_{+} < I(0)$ as $t\to\infty$. Then, integrating \eqref{eq:mon} in time, \begin{equation*}
		\begin{split}
			8\bt\int_0^\infty \int (H')^2 d\tht dt = I(0) - \bfI_{+},
		\end{split}
	\end{equation*} which gives $H \in L^2([0,\infty); \dot{H}^1(\bbS)) =: L^2_t \dot{H}^1_\tht$.  Next, from the equation \eqref{eq:H} for $H$  \begin{equation*}
		\begin{split}
			\nrm{\rd_tH}_{L^2} &\le  2\nrm{H H'}_{L^2} + \nrm{\bfK(8\bt - 3(1+\bt^2)\rd_\tht)\left[ (H')^2  \right]}_{L^2} \\
			&\le C \nrm{H}_{W^{1,\infty}} \nrm{H'}_{L^2} \le C \nrm{h}_{L^\infty} \nrm{H'}_{L^2} \le C \nrm{h_0}_{L^{\infty}} \nrm{H'}_{L^2}. 
		\end{split}
	\end{equation*} This gives $\rd_tH \in L^2_tH^1_\tht$ as well. We have that $\int H d\tht = \frac14 \int h d\tht \to \frac14 \bfI_{+}$. Applying Aubin--Lions lemma to the sequence of functions $\left\{ H(\cdot+t_n, \cdot) \right\}$ defined on $[0,\infty)\times\bbS$ (here, $t_n\ge0 $ is an arbitrary increasing sequence), we obtain a convergent subsequence in $L^2_tH^1_\tht$. The limit must be equal to the constant $\frac14 \bfI_{+}$, and therefore is independent of the choice of a subsequence; $H(t,\cdot)\to \frac14 \bfI_{+}$ in $L^2(\bbS)$. Since $h\in L^\infty(\bbS)$ uniformly in time, the convergence holds in $H^{-a}(\bbS)$ for any $a>0$ in terms of $h$. \end{proof}

	\subsection{Trichotomy for $L^p$ data}
\begin{proof}[Proof of Theorem \ref{thm:tri}]
The trichotomy of behavior follows from the analysis of $I(t)$ when $h_0\in L^p$. We work again with $\beta>0$ and $h_0$ not identically constant, then $I(t)$ is a strictly decreasing function of time and one the following three scenarios must occur. There exists $T^*\in (0,+\infty]$ such that either
	\begin{itemize}
		\item $T_*=+\infty$ and there exists $\bfI_{+}\in \mathbb{R}$ such that $I(t)\underset{t\to +\infty}{\longrightarrow}\bfI_{+}$,
		\item $T_*<+\infty$,
		\item $T_*=+\infty$ and $I(t)\underset{t\to +\infty}{\longrightarrow}-\infty$.
	\end{itemize}
	The only point requiring more analysis is the first one. Indeed as for the case $h_0\in L^\infty$ we get that $H\in L^2_t\dot{H}^1_\theta$. Now $I(t)$ being bounded combined with $H\in L^2_t\dot{H}^1_\theta$ implies that $h\in L^\infty_t L^1_\theta$. Again from \eqref{eq:H}
	\[
	\left\Vert \partial_t H \right\Vert_{L^2} \leq C\left\Vert H\right\Vert_{W^{1,\infty}}\left\Vert H'\right\Vert_{L^2}\leq C \left\Vert h\right\Vert_{L^\infty_{(0,+\infty)}L^1_\theta}\left\Vert H'\right\Vert_{L^2},
	\]
	and the proof follows as in the previous paragraph.    
\end{proof}

	\subsection{Singularity formation for Diracs}

    \begin{proof}[Proof of Theorem \ref{thm:blowup-Dirac}]
        
	We assume $\bt>0$. To begin with, we note that for each fixed $N$, if $h = \sum_{i=1}^{N} I_i\dlt(\tht-\tht_i)$ then $|\sum_{i=1}^{N} I_i|$ is controlled $\nrm{H'}_{L^{2}}$.\footnote{On the other hand, it is not correct for the norm $\sum_{i=1}^{N} |I_i|$.} This is clear in the case $N=1$. To see this for $N=2$, note that $\nrm{H'}_{L^2}$ is a bilinear form of $I_1$ and $I_2$ with coefficients depending only on $\tht_1-\tht_2$. Then, it suffices to observe that this bilinear form is strictly positive when $I_1+I_2 \ne 0$ and converges to a strictly positive quadratic form of $(I_1+I_2)$ as $\tht_1-\tht_2 \to 0$. The general case follows from a continuity argument and induction in $N$. That is, \begin{equation*}
		\begin{split}
			\nrm{H'}_{L^2} \ge c_N|\sum_{i=1}^{N} I_i|
		\end{split}
	\end{equation*} for some $c_N>0$ depending on $N$. Applying \eqref{eq:mon-di}, we see that \begin{equation}\label{eq:Ric}
		\begin{split}
			\frac{d}{dt} ( \sum_{i=1}^{N} I_i ) \le -8\bt c_{N}^{2} ( \sum_{i=1}^{N} I_i )^{2}. 
		\end{split}
	\end{equation} In particular, if it happens that $( \sum_{i=1}^{N} I_i ) < 0$ for some $t_0$, then necessarily $( \sum_{i=1}^{N} I_i(t) ) \to -\infty$ as $t$ approaches some $T>t_0$. Therefore, for finite time singularity to not occur, it is \textit{necessary} that $\sum_{i=1}^{N} I_i(t) \ge 0$ for all $t\ge0$. Furthermore, we can exclude the case $\sum_{i=1}^{N} I_i(t) = 0$ (for any $t\ge0$) since $\sum_{i=1}^{N} I_i(t)$ is \textit{strictly} decreasing (see \eqref{eq:mon-di}), unless $h$ is trivial. Then, we have that  $\sum_{i=1}^{N} I_i(t) > 0$ for all $t\ge0$ and \eqref{eq:Ric} tells us that  $\sum_{i=1}^{N} I_i(t) \to 0$ as $t\to\infty$.\end{proof}
    
	\subsection{Case study of $m$ symmetric Dirac deltas}\label{subsec:case-study}
	
	In this section, we revisit the case of $m$-fold symmetric Dirac deltas where $m\ge1$ is an integer. Namely, we consider the dynamics of the solution of the form \begin{equation}\label{eq:initial-symm}
		\begin{split}
			h(t,\cdot) = I_{0}(t) \sum_{j=0}^{m-1} \dlt_{\tht_{j}(t)}
		\end{split}
	\end{equation} where $\tht_{j}(t) = \tht_{0}(t) + 2\pi j/m$ for $j = 1, \cdots, m-1$. The $m$-fold symmetry is preserved in time, and the solution is characterized by $(I_{0}, \tht_{0})$. It is then natural to introduce the $m$-fold symmetric kernel $K^{m}$ by \begin{equation*}
		\begin{split}
			K^{m}(\tht) = \sum_{j=0}^{m-1} K(\tht+ 2\pi j/m ). 
		\end{split}
	\end{equation*} We give an explicit form of this symmetrized kernel in the Appendix. Furthermore, we can simply take the spatial domain to be $\bbS^{m}$ which is $(0,2\pi/m)$ with endpoints identified with each other. The system of equations for $(I_{0}, \tht_{0})$ reads \begin{equation}\label{eq:dirac-m-fold-tht}
		\begin{split}
			\frac{d}{dt} \tht_{0}(t) = 2H(t,\tht_{0}(t)) = 2 K^{m}(0)I_{0}(t)
		\end{split}
	\end{equation} \begin{equation}\label{eq:dirac-m-fold-I}
		\begin{split}
			\frac{d}{dt} I_{0}(t) = 2 (K^{m})'(0) (I_{0}(t))^2. 
		\end{split}
	\end{equation} We see that the equation for $I_{0}$ does not involve the other variable $\tht_{0}$, and the solution is simply \begin{equation*}
		\begin{split}
			I_{0}(t) = \frac{I_{0}(0)}{1 - 2 (K^{m})'(0) I_{0}(0) t} . 
		\end{split}
	\end{equation*} Depending on the sign of $2 (K^{m})'(0) I_{0}(0)$, we have either finite-time blow up or decay of rate $1/t$ as $t$ becomes large. The constant $ (K^{m})'(0) $ can be explicitly determined as a function of $m,\bt$ and is given in Remark \ref{rem:derv ker 0}. Assume that $ (K^{m})'(0) I_{0}(0) > 0$, so that the Dirac solution blows up at some $T^*>0$. It is an interesting exercise to see what happens to the sequence of patch regularizations of \eqref{eq:initial-symm}: \begin{equation}\label{eq:initial-patch}
		\begin{split}
			h_{0}^{\varep} := \frac{I_{0}(0)}{2\varep} \sum_{j=0}^{m-1} \mathbf{1}_{[\tht_{j}(0)-\varep, \tht_{j}(0)+\varep]}. 
		\end{split}
	\end{equation} While there is a global solution associated with $h_{0}^{\varep} $, one can show that as $t\to T^*$, the support of $h^{\varep}(t) $ occupies almost all of the spatial domain, so that in particular $\nrm{ h^{\varep} (T^*) }_{L^{1}} = O(\varep^{-1})$, which blows up as $\varep\to 0^+$. 
	
	\subsection{Case study of two non-symmetric Diracs}\label{subsec:case-study-2}

    In this section, we study the evolution of two Dirac deltas in the case $m=1$. For simplicity, we shall assume that they evolve in a self-similar fashion: their distance in $\bbS$ is fixed while the amplitudes are proportional to $1/t$. To this end, we recall the system \eqref{eq:ODE} for two Diracs: \begin{equation*}
	\begin{split}
		I_1' &= 2K'(0) I_1^2 + 2K'(\tht_1-\tht_2) I_1 I_2, \\
		I_2' &= 2K'(0) I_2^2 + 2K'(\tht_2-\tht_1) I_1 I_2, \\
		\tht_1 ' &= 2I_1 K(0) + 2I_2 K(\tht_1-\tht_2), \\
		\tht_2' &= 2I_2 K(0) + 2I_1 K(\tht_2-\tht_1), 
	\end{split}
\end{equation*} and assume a solution of the form \begin{equation*}
     I_1(t)=A_1 t^{-1}, \quad I_2(t)=A_2t^{-1},\quad \tht_1(t)-\tht_2(t) = d
\end{equation*} where $A_1,A_2,d$ are constants. We may assume further that $0<d\le \pi$. Under these assumptions, the ODE reduces to the following system of algebraic equations \begin{equation}\label{eq:two-dirac-sys}
	\begin{split}
		-1 &= 2A_1 K'(0) + 2A_{2} K'(d), \\
            -1 &= 2A_{2} K'(0) + 2A_{1} K'(-d),\\
            0 & = (A_{1}-A_{2})K(0) + A_{2} K(d) - A_{1}K(-d). 
	\end{split}
\end{equation} Assuming that $(K'(0))^{2} - K'(d)K'(-d) \ne 0$, $A_{1}$ and $A_{2}$ are uniquely determined from the first two equations in terms of $d$, and we are left with the single equation \begin{equation}\label{eq:two-dirac}
    K(0)(  K'(-d)-K'(d)) + K(d)(K'(0)-K'(-d)) + K(-d)(K'(d)-K'(0)) = 0. 
\end{equation} Even when $(K'(0))^{2} - K'(d)K'(-d) = 0$, a solution of \eqref{eq:two-dirac} gives (infinitely many) solutions to \eqref{eq:two-dirac-sys}. We clearly see that $d=\pi$ solves \eqref{eq:two-dirac}, which simply corresponds to the symmetric self-similar blow up of two Dirac deltas. We now consider the function 
\[
F(\beta,d)=K(0)(  K'(-d)-K'(d)) + K(d)(K'(0)-K'(-d)) + K(-d)(K'(d)-K'(0)),
\]
then computing 
\[
K_{\beta}(\theta)= -\frac{\sin(2\theta)}{8\pi\beta}+O(\beta) \text{ and } K_{\beta}'(\theta)= -\frac{\cos(2\theta)}{4\pi\beta}+O(\beta)
\]
we get
\[
F(\beta,d)\underset{\beta \to 0}{\sim} \frac{1}{16\pi^2 \beta^2}\sin(2d)(1-\cos(2d)).
\]
Thus considering the function $\tilde{F}=16\pi^2 \beta^2 F$ we observe that $\tilde{F}$ is $C^1$ in neighborhood of $(0,\frac{\pi}{2})$, $\tilde{F}(0,\frac{\pi}{2})=0$ and $\partial_d\tilde{F}(0,\frac{\pi}{2})=-4$ thus an application of the implicit function theorem immediately yields the existence and uniqueness of a continuum of (non symmetric) solutions in the form $(\beta,d(\beta))$ in a neighborhood of $(0,\frac{\pi}{2})$. This exactly the content of \cite[Theorem 1]{CKW22} in the case $M=2$.

\begin{remark}\label{Rem: contains ckw}
    The case of $2$ Diracs is not special indeed an analogous reasoning for $M$ Diracs yields a system of $1+M-1$ equations analogous to \eqref{eq:two-dirac}. It can be then observed that the problem reduces to finding the zeros of another function $F$ of $1+M-1$ variables, $\beta$ and the $M-1$ differences of angles, into $\mathbb{R}^{M-1}$ for which the implicit function theorem can applied near the point $\beta=0$ and $(\frac{\pi}{M},\cdots,\frac{(M-1)}{M}\pi)$ if the differential is not singular in the $M-1$ variables. This is the content of Theorem 1 of \cite{CKW22} where the cases $M\in \{2,3,5,7,9\}$ are covered.
\end{remark} 

\noindent We get back to the case $M=2$ and study the behavior of $F$ in the limit $\beta$ goes to infinity. In this case, it is not very difficult to verify that \begin{equation*}
    K_{\bt}(\tht) = \frac{1}{8\pi} + \frac{(2\pi-\tht)\tht}{4\pi \bt^2} + O(\bt^{-3}),\qquad K_{\bt}'(\tht) = \frac{\pi-\tht}{2\pi\bt^2} + O(\bt^{-3}).
\end{equation*} This gives that, as $\bt\to\infty$, \begin{equation*}
    F(\bt,d)\sim \frac{1}{4\pi^2 \bt^4} (2\pi-d)(\pi-d)d + O(\bt^{-6}).  
\end{equation*} This limit can be shown to be uniform in $C^1$ on $0\le d \le \pi$. From this we deduce that for $0<d<\pi$, there are no zeros of $F(\bt,d)$ as long as $\bt$ is taken to be sufficiently large. Combining this computation with the previous one, we can arrive at the following bifurcation result in the case of two Dirac deltas. 
\begin{proposition}
    There exist some $\bt_{0},\bt_{1}>0$ such that the following holds: \begin{itemize}
        \item for all $0<\bt<\bt_{0}$, there is only one zero of $F(\bt,d)=0$ in $0<d<\pi$. This unique zero converges to $\pi/2$ as $\bt\to0$, and 
        \item for all $\bt>\bt_{1}$, there are no zeroes of $F(\bt,d)=0$ in $0<d<\pi$.
    \end{itemize} In particular, there exists at least one bifurcation point of non-symmetric solutions from the symmetric one.
\end{proposition}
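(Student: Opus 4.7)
The strategy is to analyze the two asymptotic regimes $\beta\to 0^+$ and $\beta\to\infty$ separately using the two kernel expansions already computed in the text, and then combine them by a continuity argument to produce the bifurcation point.

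For the small-$\beta$ regime, the IFT applied above to $\tilde F(\beta,d)=16\pi^{2}\beta^{2}F(\beta,d)$ at $(0,\pi/2)$ already produces a unique branch $d(\beta)\to \pi/2$ in a neighborhood of $\pi/2$. To upgrade this to uniqueness on all of $(0,\pi)$, I would proceed in two steps. First, on any compact subset $K$ of $(0,\pi/2)\cup(\pi/2,\pi)$ the limit $G(d):=\sin(2d)(1-\cos(2d))$ is bounded away from zero with a fixed sign, so the uniform expansion of $K_{\beta}$ gives $\tilde F(\beta,\cdot)\to G$ uniformly on $K$, hence $F(\beta,\cdot)\neq 0$ on $K$ for small $\beta$. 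Second, and more subtly, I must exclude zeros near the endpoints $d=0,\pi$, where $G$ vanishes (to order three) and uniform convergence alone is insufficient. For this I would use pointwise identities valid for every $\beta>0$: the jump $K'(0^-)-K'(0^+)=1/(1+\beta^{2})$ combined with a direct Taylor expansion of the three terms defining $F$ gives $F(\beta,0^+)=0$ together with
\[
\partial_{d}F(\beta,0^+)\;=\;\tfrac{1}{2}\bigl(K'(0^-)-K'(0^+)\bigr)^{2}\;=\;\frac{1}{2(1+\beta^{2})^{2}}\;>\;0,
\]
so $F(\beta,\cdot)>0$ on an interval $(0,\eta)$ with $\eta$ uniform as $\beta\to 0$. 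At the far endpoint, $K(\pm\pi)=K(\pi)$ and $K'(\pm\pi)=K'(\pi)$ give $F(\beta,\pi)=0$ and
\[
\partial_{d}F(\beta,\pi)\;=\;2K''(\pi)\bigl(K(\pi)-K(0)\bigr)+2K'(\pi)\bigl(K'(0)-K'(\pi)\bigr),
\]
which, evaluated via the small-$\beta$ expansion of $K_{\beta}$, is of size $1/\beta$ with a definite sign, excluding zeros of $F(\beta,\cdot)$ in $(\pi-\eta,\pi)$ for $\beta$ small. These four local exclusions together force the unique zero in $(0,\pi)$ to be the IFT branch $d(\beta)\to\pi/2$.

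For the large-$\beta$ regime, the uniform $C^{1}$ limit $\beta^{4}F(\beta,d)\to G_{\infty}(d):=(2\pi-d)(\pi-d)d/(4\pi^{2})$ on $[0,\pi]$ already recorded in the excerpt is more forgiving because $G_{\infty}>0$ strictly on $(0,\pi)$. On any compact subset $K$ of $(0,\pi)$, positivity of $G_{\infty}$ plus uniform convergence gives $F(\beta,\cdot)>0$ for $\beta$ large. Near the endpoints, $F(\beta,0^+)=F(\beta,\pi)=0$ together with $C^{1}$ convergence yield $\partial_{d}F(\beta,0^+)\sim G_{\infty}'(0)/\beta^{4}=1/(2\beta^{4})>0$ and $\partial_{d}F(\beta,\pi)\sim G_{\infty}'(\pi)/\beta^{4}=-1/(4\beta^{4})<0$, so $F(\beta,\cdot)>0$ in one-sided neighborhoods of $0$ and $\pi$ as well; patching gives $F(\beta,\cdot)>0$ on all of $(0,\pi)$ for $\beta>\beta_{1}$. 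The bifurcation statement then follows by connectedness: the branch $(\beta,d(\beta))$ of zeros emanating from $(0,\pi/2)$ cannot be continued past $\beta_{1}$, so its maximal interval of existence terminates at some $\beta^{*}\in[\beta_{0},\beta_{1}]$, which is by construction a bifurcation point of $F=0$.

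The main obstacle is the small-$\beta$ endpoint analysis: since $G$ vanishes to order three at both $d=0$ and $d=\pi$, uniform $C^{1}$ convergence $\tilde F\to G$ does not by itself rule out zeros of $\tilde F(\beta,\cdot)$ accumulating near the endpoints as $\beta\to 0^+$. One is forced to use the pointwise identities above, which require the jump formula for $K'$ together with the next-order term in the small-$\beta$ expansion of $K_{\beta}$ in order to pin down the signs and non-degeneracy of $\partial_{d}F(\beta,0^+)$ and $\partial_{d}F(\beta,\pi)$ uniformly in $\beta$. The remainder of the proof is routine.
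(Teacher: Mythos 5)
Your proposal follows the same route as the paper: the two asymptotic expansions of $K_\beta$ (as $\beta\to0$ and as $\beta\to\infty$), the implicit function theorem applied to $\tilde F=16\pi^2\beta^2F$ at $(0,\pi/2)$, and uniform positivity of the rescaled limit for large $\beta$; the paper simply asserts that combining these two computations yields the proposition. What you add is the endpoint analysis at $d=0,\pi$, which the paper leaves implicit and which is genuinely needed in the small-$\beta$ regime, since the limit $\sin(2d)(1-\cos(2d))$ vanishes to third order at both endpoints. Your identities check out: using the jump $K'(0^-)-K'(0^+)=\tfrac{1}{1+\beta^2}$ and the ODE for $K$, one finds $F(\beta,0^+)=0$ and, after the $K''$ terms cancel, $\partial_dF(\beta,0^+)=2K'(0)^2-2K'(0^+)K'(0^-)=\tfrac12\bigl(K'(0^-)-K'(0^+)\bigr)^2$, exactly as you claim. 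The one soft spot is the assertion that this alone gives a zero-free interval $(0,\eta)$ with $\eta$ uniform as $\beta\to0$: since $\partial_d\tilde F(\beta,0^+)=8\pi^2\beta^2/(1+\beta^2)^2\to0$ while the cubic coefficient of the limit $G$ is of order one, you need uniform control of at least one more $d$-derivative of $\tilde F$ near $d=0$ (available from the explicit kernel) to exclude a pair of zeros appearing at scale $d\sim\beta$; likewise the ``definite sign'' of $\partial_dF(\beta,\pi)$ requires the $O(1)$ correction to $K_\beta'$, not just the leading term $-\cos(2\theta)/(4\pi\beta)$. These are finite computations rather than gaps in the idea, and you have correctly identified where the remaining work lies --- indeed, your writeup is more careful at the endpoints than the paper's own.
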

\subsection{Some open questions}
The following observations would be interesting to investigate further potentially shedding some light on some new phenomena emerging in the long time dynamics for solutions of the 2d Euler equations.
\begin{enumerate}
    \item Based on numerical simulations it seems like $\beta_0=\beta_1=\beta_b$ and the two Diracs system exhibits a unique bifurcation.
    \item For $\beta> \beta_b$, the symmetric configuration seems to be stable and is the unique global attractor of the blow up dynamics exhibited in Theorem \ref{thm:blowup-Dirac}.
    \item For $\beta< \beta_b$, bifurcation occurs, the new configuration becomes the stable and unique global attractor of the blow up and the symmetric configuration becomes unstable.
    \item It would be interesting to investigate further this bifurcation phenomena for three or more Diracs. 
    \item One can investigate the role of $m$-fold symmetry in the bifurcation of non-symmetric solutions. 
\end{enumerate}
	
	\appendix

	\section*{Appendix}
	\section{Derivation of the kernel}

        In this section, we give an explicit form of the kernel. 
 
	\begin{lemma}\label{lem:ker comp}
		Let $h, H$ solve the elliptic equation \eqref{eq:elliptic} for some $\bt\ne0$ and suppose moreover they are $m$-fold symmetric in $\theta$ for $m\geq 1$. Then for $\theta \in [0, \frac{2\pi}{m})$ we have
		\[
		H(\tht)=\int_{0}^{2\pi} h(\tht') K^m_{\beta}(\tht-\tht') d\tht',
		\]
		with 
  \[
 K^m_{\beta}(\theta)= \frac{1}{4}\Re \left[\frac{e^{\frac{2(\beta-i)}{m(1+\beta^2)}(m\theta-\pi)}}{\sin\left(\frac{2\pi(1+i\beta)}{m(1+\beta^2)}\right)}\right].
  \]
	\end{lemma}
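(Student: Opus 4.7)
The plan is to reduce the computation to the Green's function for the ODE $\calL G = \dlt_0$ on the quotient circle $\bbS_m := \bbR/(2\pi/m)\bbZ$, where $\calL := (1+\bt^2)\rd_\tht^2 - 4\bt\rd_\tht + 4$, and then to solve that ODE explicitly by the method of characteristic roots.

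First I would unfold the convolution on $\bbS$. Since $h$ is $m$-fold symmetric, splitting the integration range into $m$ equal pieces gives
\[ H(\tht) = \int_0^{2\pi} K(\tht-\tht') h(\tht')\, d\tht' = \int_0^{2\pi/m}\left(\sum_{j=0}^{m-1} K\left(\tht - \tht' + \frac{2\pi j}{m}\right)\right) h(\tht')\, d\tht', \]
so it suffices to identify the symmetrized kernel $K^m_\bt(\tht) := \sum_{j=0}^{m-1} K(\tht + 2\pi j/m)$. Observing that $K^m_\bt$ is $(2\pi/m)$-periodic and that its distributional image under $\calL$ consists of $m$ unit Dirac masses on $\bbS$ that project to a single unit Dirac on $\bbS_m$, I conclude that $K^m_\bt$ descends to the unique Green's function on $\bbS_m$. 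Concretely, it solves $\calL K^m_\bt = 0$ smoothly on $(0, 2\pi/m)$, is continuous at the identified endpoints, and carries the derivative jump
\[ (K^m_\bt)'(0^+) - (K^m_\bt)'((2\pi/m)^-) = \frac{1}{1+\bt^2}, \]
obtained by integrating $\calL K^m_\bt = \dlt_0$ across the origin.

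Next I would diagonalize $\calL$. The characteristic equation $(1+\bt^2)\lmb^2 - 4\bt\lmb + 4 = 0$ has complex conjugate roots $\lmb_\pm = 2(\bt\pm i)/(1+\bt^2)$. Setting $\lmb := \lmb_- = 2(\bt-i)/(1+\bt^2)$, every real-valued solution on the open interval takes the form $K^m_\bt(\tht) = \Re[C\, e^{\lmb(\tht - \pi/m)}]$ for a complex constant $C$; centering the exponential at the midpoint $\pi/m$ is a convenience that makes the boundary algebra symmetric.

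Plugging into the continuity and jump conditions reduces to the two real equations
\[ \Re[-2C\sinh(\pi\lmb/m)] = 0 \qquad\text{and}\qquad \Re[-2C\lmb\sinh(\pi\lmb/m)] = \frac{1}{1+\bt^2}. \]
The choice $C = -i/(4\sinh(\pi\lmb/m))$ makes the first identity automatic and reduces the second to $\Re[i\lmb/2] = 1/(1+\bt^2)$, which follows immediately from $i\lmb = (2+2i\bt)/(1+\bt^2)$. Finally I would repackage $\sinh$ using $\sinh(z) = -i\sin(iz)$ together with the algebraic identity $1 + i\bt = i(\bt-i)$, which gives $\pi i\lmb/m = 2\pi(1+i\bt)/(m(1+\bt^2))$, so
\[ C = \frac{1}{4\sin(2\pi(1+i\bt)/(m(1+\bt^2)))}, \]
and substituting back produces exactly the displayed closed form. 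The only delicate point is bookkeeping: fixing a consistent sign convention in the derivative jump, and spotting the identity $1+i\bt = i(\bt-i)$ that turns the hyperbolic form into the trigonometric one featured in the statement; everything else is elementary linear algebra.
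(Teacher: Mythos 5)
Your proof is correct, but it takes a genuinely different route from the paper's. The paper works entirely on the Fourier side: it inverts the elliptic operator mode by mode, writes the multiplier $(4-4i\bt n-(1+\bt^2)n^2)^{-1}$ in partial fractions, and resums with the classical identity $\sum_{n\in\bbZ}e^{in\tht}/(n+\alp)=\pi e^{i(\pi-\tht)\alp}/\sin(\pi\alp)$ (Lemma~\ref{lem:FS}), then passes to general $m$ by rescaling the Fourier variable. You instead unfold the convolution onto a fundamental domain, characterize the symmetrized kernel as the Green's function of the constant-coefficient operator on $\bbR/(2\pi/m)\bbZ$, and solve by characteristic roots plus continuity/jump matching; I checked the roots $\lmb_{\pm}=2(\bt\pm i)/(1+\bt^2)$, the two matching equations, the choice $C=-i/(4\sinh(\pi\lmb/m))$, and the conversion via $\sinh(z)=-i\sin(iz)$, and they land exactly on the stated formula. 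Your approach is more elementary (no resummation lemma) and makes explicit the continuity-plus-derivative-jump structure of $K$ that the paper uses elsewhere (\eqref{eq:K}, \eqref{eq:Dirac-Hprime-explicit}); what it needs in exchange is one sentence you omitted, namely why the periodic Green's function is unique: any two differ by a $(2\pi/m)$-periodic solution of the homogeneous ODE, and since $\Re\lmb_{\pm}=2\bt/(1+\bt^2)\neq 0$ for $\bt\neq 0$ no nontrivial such solution is periodic (this is also precisely where $\bt\neq 0$ enters). Two bookkeeping points, neither a gap in your argument: your jump convention $K'(0^+)-K'((2\pi/m)^-)=1/(1+\bt^2)$ is the one actually satisfied by the displayed kernel (and the one forced by integrating the equation across the origin), even though \eqref{eq:K} records the jump with the opposite orientation; and your pairing of $K^m_\bt=\sum_j K(\cdot+2\pi j/m)$ with integration over the fundamental domain $[0,2\pi/m)$ is the internally consistent normalization, since pairing that same closed form with $\int_0^{2\pi}$ as in the lemma's display would overcount by a factor of $m$ when $m\ge 2$.
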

	\begin{remark}\label{rem:derv ker 0}
We record the values
\[
K_\beta^m(0)=\frac{1}{4}\Re\left[\cot\left(\frac{2\pi(1+i\beta)}{m(1+\beta^2)}\right)\right],
\]
and
\[
{K_\beta^m} '(0)=\frac{\Re\left[(\beta-i)\cot\left(\frac{2\pi(1+i\beta)}{m(1+\beta^2)}\right)\right]}{2(1+\beta^2)}=\frac{4\beta K_\beta^m(0)+\Im\left[\cot\left(\frac{2\pi(1+i\beta)}{m(1+\beta^2)}\right)\right]}{2(1+\beta^2)}.
\]
	\end{remark}
	\begin{proof}
 We give the proof in the case $m = 1$, as the general case can be obtained similarly. 
		Using the Fourier series \begin{equation*}
			\begin{split}
				h(\tht) = \sum_{n\in\bbZ} \hat{h}_n e^{in\tht},\qquad H(\tht) = \sum_{n\in\bbZ} \hat{H}_n e^{in\tht}, 
			\end{split}
		\end{equation*} we obtain the relation \begin{equation}\label{eq:elliptic-Fourier}
			\begin{split}
				\hat{H}_n = \frac{1}{4-4\bt in - (1+\beta^2)n^2} \hat{h}_n= \left[  \frac{1}{ \frac{2i}{i-\beta}- n } + \frac{1}{\frac{2i}{i+\beta} + n}  \right] \frac{\hat{h}_n}{4}.
			\end{split}
		\end{equation} 
			Thus
			\begin{align}\label{eq:new K 1}
				K_{\beta}(\theta)&=\frac{1}{8\pi}  \sum_{n\in\bbZ}  \left[  \frac{1}{ \frac{2i}{i-\beta}- n } + \frac{1}{\frac{2i}{i+\beta} + n}  \right] e^{in\tht }=\frac{1}{8\pi}  \sum_{n\in\bbZ}  \left[  \frac{1}{- \frac{2i(i+\beta)}{1+\beta^2}- n } + \frac{1}{\frac{2i(-i+\beta)}{1+\beta^2} + n}  \right] e^{in\tht }\nonumber\\
				&=\frac{1}{8\pi}  \sum_{n\in\bbZ} \left[\frac{1}{\frac{2(1+i\beta)}{1+\beta^2} + n} - \frac{1}{ \frac{2(-1+i\beta)}{1+\beta^2}+ n }   \right] e^{in\tht }.
			\end{align}
			We use the following standard Fourier series computation found for example in \cite{stein2003princeton}.
			\begin{lemma}\label{lem:FS}
				For $\alpha \in \mathbb{C}\setminus \mathbb{N}$ and $\theta \in \mathbb{R}$
				\[
				\sum_{n\in \mathbb{N}}\frac{e^{in\theta}}{n+\alpha}=\frac{\pi}{\sin(\pi\alpha)}e^{i(\pi-\theta)\alpha}.
				\]
			\end{lemma}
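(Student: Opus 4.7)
The plan is to prove this classical identity via direct Fourier analysis. (For the identity to make sense as written, and to match its use in \eqref{eq:new K 1} where the sum actually runs over $\bbZ$, I interpret $\bbN$ as $\bbZ$; a strictly one-sided sum would give a hypergeometric value rather than the simple $\pi/\sin(\pi\alp)$.) The strategy is to compute the Fourier coefficients of the right-hand side viewed as a $2\pi$-periodic function of $\tht$, and then invoke pointwise convergence on the open interval $(0, 2\pi)$.

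Concretely, fix $\alp \in \bbC \setminus \bbZ$ and set $f_\alp(\tht) := e^{i(\pi - \tht)\alp}$, extended $2\pi$-periodically from $[0, 2\pi)$. A direct computation gives
\[
\hat{f}_{\alp, n} = \frac{1}{2\pi}\int_0^{2\pi} e^{i(\pi - \tht)\alp}\, e^{-in\tht}\, d\tht = \frac{e^{i\pi\alp}\bigl(1 - e^{-2\pi i \alp}\bigr)}{2\pi i\,(n + \alp)} = \frac{\sin(\pi\alp)}{\pi (n+\alp)},
\]
using $e^{-2\pi i n}=1$ for $n \in \bbZ$ and $(e^{i\pi\alp} - e^{-i\pi\alp})/(2i) = \sin(\pi\alp)$. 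Writing $\alp = a + ib$, the function $f_\alp(\tht) = e^{-(\pi-\tht)b}\, e^{i(\pi-\tht)a}$ is bounded and $C^\infty$ on $(0, 2\pi)$, with a single jump at the endpoints. Thus Dirichlet's theorem on pointwise convergence of Fourier series applies, so the symmetric partial sums $\sum_{|n| \le N} \hat{f}_{\alp, n}\, e^{in\tht}$ converge to $f_\alp(\tht)$ for every $\tht \in (0, 2\pi)$. Rearranging the resulting identity yields the claim.

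The only subtlety is that the series $\sum_n e^{in\tht}/(n+\alp)$ is only conditionally convergent — the summand decays like $1/|n|$ — so a mode of summation must be specified. Dirichlet's theorem delivers convergence of symmetric partial sums, which is precisely the form needed in \eqref{eq:new K 1}, where the lemma is applied to each of two terms in the partial-fraction decomposition and the resulting series are combined as the symmetric Fourier expansion of $K_\bt$. An alternative proof via contour integration of $\pi \cot(\pi z)\, e^{iz\tht}/(z+\alp)$ around a large square (letting the residues at the integers produce the series and the residue at $z=-\alp$ produce the right-hand side) automatically selects the symmetric truncation and would give the same result; however, the Fourier route is shorter and aligns with the reference \cite{stein2003princeton}.
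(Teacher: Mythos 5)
Your proof is correct, and it is essentially the argument the paper implicitly relies on: the paper gives no proof of this lemma, citing Stein--Shakarchi, and the standard derivation behind that citation is exactly your computation of the Fourier coefficients of $e^{i(\pi-\theta)\alpha}$ on $[0,2\pi)$ followed by Dirichlet's pointwise convergence theorem for the symmetric partial sums. You are also right on the two points of hygiene: $\mathbb{N}$ must be read as $\mathbb{Z}$ (both as the index set and as the excluded set), and the identity holds for $\theta\in(0,2\pi)$ in the symmetric-summation sense, which is precisely how it is used in \eqref{eq:new K 1}; at $\theta\in 2\pi\mathbb{Z}$ the symmetric sums converge instead to $\pi\cot(\pi\alpha)$, consistent with the values recorded in Remark \ref{rem:derv ker 0}.
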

			\noindent Applying the previous lemma to \eqref{eq:new K 1} we get for $\beta\neq 0$
			\begin{equation}\label{eq:cln K}
				K_{\beta}(\theta)=\frac{1}{8}\left[\frac{e^{i\frac{2(1+i\beta)}{1+\beta^2}(\pi-\theta)}}{\sin\left(\frac{2\pi(1+i\beta)}{1+\beta^2}\right)}-\frac{e^{i\frac{2(-1+i\beta)}{1+\beta^2}(\pi-\theta)}}{\sin\left(\frac{2\pi(-1+i\beta)}{1+\beta^2}\right)}\right]=\frac{1}{4}\Re \left[\frac{e^{\frac{2(\beta-i)}{1+\beta^2}(\theta-\pi)}}{\sin\left(\frac{2\pi(1+i\beta)}{1+\beta^2}\right)}\right].
				\end{equation}
  
			\noindent Noting that the $m$-fold symmetric Kernel $m\geq 1$ is given by 
			\begin{align*}
				K_\beta^m(\theta)&=\frac{1}{8\pi}  \sum_{n\in\bbZ}  \left[  \frac{1}{ \frac{2i}{i-\beta}- mn } + \frac{1}{\frac{2i}{i+\beta} + mn}  \right] e^{imn\tht }=\frac{1}{8\pi}  \sum_{n\in\bbZ} \left[\frac{1}{\frac{2(1+i\beta)}{1+\beta^2} + mn} - \frac{1}{ \frac{2(-1+i\beta)}{1+\beta^2}+ mn }   \right] e^{imn\tht }\\
				&=\frac{1}{8\pi m}  \sum_{n\in\bbZ} \left[\frac{1}{\frac{2(1+i\beta)}{m(1+\beta^2)} + n} - \frac{1}{ \frac{2(-1+i\beta)}{m(1+\beta^2)}+ n }   \right] e^{in m\tht },
			\end{align*}
			Thus making the change of variables $\theta\leftrightarrow m\theta$ and $(1+\beta^2)\leftrightarrow m(1+\beta^2)$ we get for $\theta\in (0,\frac{2\pi}{m})$ the desired result.
		\end{proof}

	\begin{figure}
		\centering
		\includegraphics[scale=0.6]{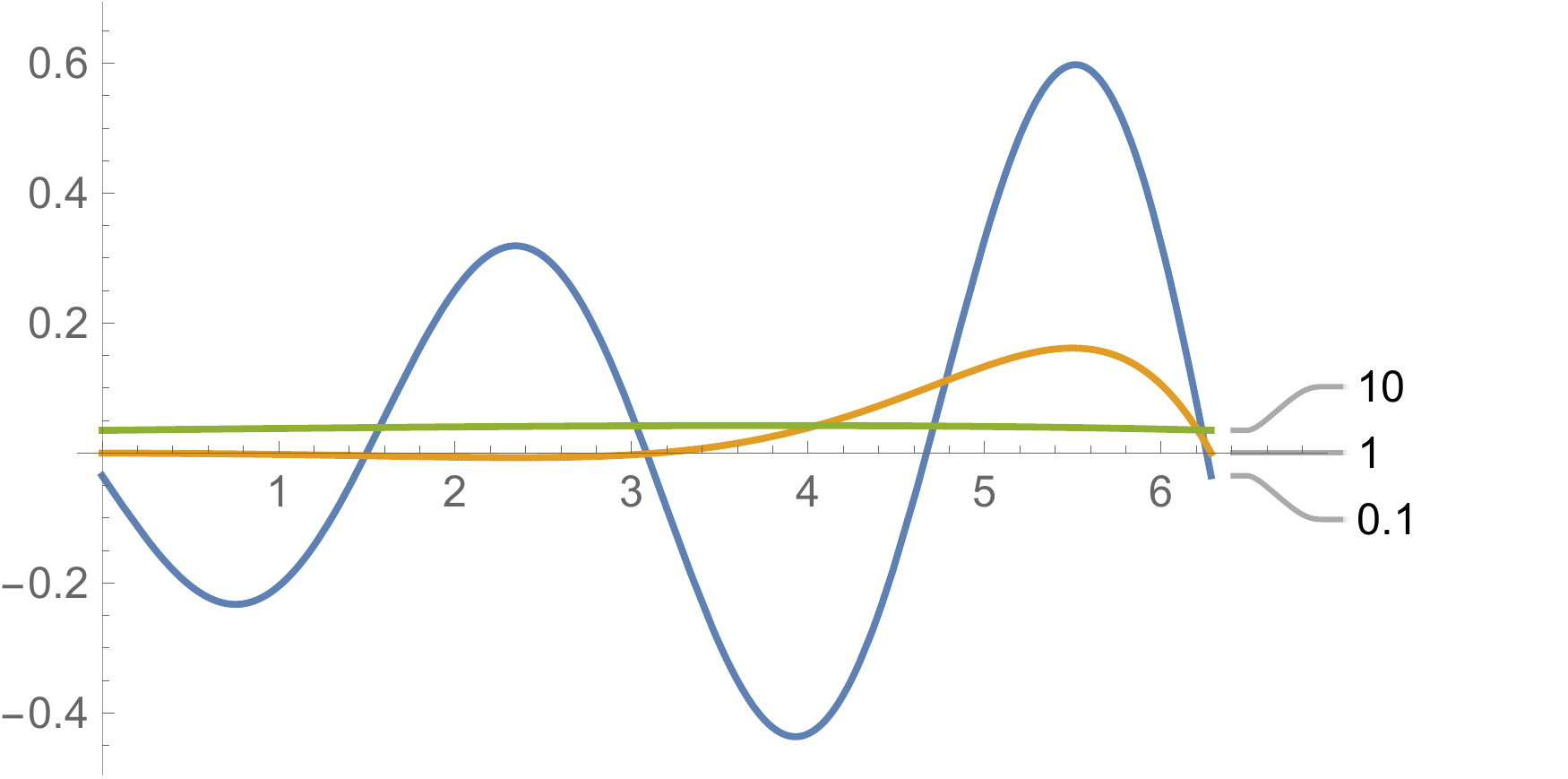}   
		\caption{Plots of $K^1_\bt(\tht)$ for some values of $\bt$} \label{fig:kernel}
	\end{figure}

\begin{example}
			Let us take $\beta = 1$ and $m=1$. Then, on the interval $(0,2\pi)$, we explicitly have \begin{equation*}
				\begin{split}
					K_{1}^{1} (\tht) = \frac{1}{2}\frac{\sin(\theta)e^\theta}{1-e^{2\pi}}, \quad (K^{1}_{1})'(\tht) = \frac{1}{2}\frac{(\sin(\theta)+\cos(\theta))e^\theta}{1-e^{2\pi}}.
				\end{split}
			\end{equation*} The plot is given in Figure \ref{fig:kernel}. 
		\end{example}

   \section{Self-similar logarithmic vortex sheets}\label{sec:self-similar}

   In this section, we show how our formulation of logarithmic vortex sheets corresponds to the traditional one which goes back to the work of Prandtl. We follow the notation of \cite{CKW21,CKW22}: in the case of one branch, they write 
   \begin{equation*}
				\begin{split}
					Z(t,\tht) & = t^{\mu} \exp( i\tht + (\tht-\tht_{0})/\beta), \\
     \Gmm(t,\tht) & = g t^{2\mu - 1} \exp( 2(\tht-\tht_{0})/\beta)
				\end{split}
			\end{equation*} where $Z$ and $\Gmm$ correspond to the location and circulation of the spiral in $\bbR^2$ parameterized by $\tht\in \bbR$, respectively. Here $\tht_{0},g,\mu$ are constants. The vorticity is then given by the sheet supported on the set $\Sigma(t) = \left\{  Z(t,\tht) : \tht \in \bbR \right\} $, characterized by local circulation \begin{equation*}
			    R^{-2}\iint_{|x|\le R} \omg(t,x) dx = gt^{-1}.
			\end{equation*} 
   Taking $t = 1$ to be the initial time for simplicity, the above ansatz corresponds to taking our $h$ function to be \begin{equation*}
       h(t=1,\tht) = 2g \dlt(\tht-\tht_{0}). 
   \end{equation*} (Here, the factor 2 comes from comparing the circulation formula with \eqref{eq:entropy spirals}.) Using our formulation, for the Dirac delta solution corresponding to $h(t=1,\cdot)$ to be self-similar, we need to have \begin{equation*}
       h(t,\tht) = \frac{2g}{t} \dlt(\tht - \Theta(t,\tht_{0})), \quad \Theta(t,\tht_0) = \tht_0 - \bt\mu \ln(t)
   \end{equation*}
   and then we obtain two consistency equations by comparing these with the ODE system \eqref{eq:ODE} at $t=1$, which are nothing but exactly the ones given in \cite[Corollary 1.3]{CKW21}. Indeed we are looking for $I(t)=\frac{2g}{t}$ and $\Theta(t,\theta_0)$ to be solutions of \eqref{eq:ODE} which imposes in the case of one branch:
   \[
   I'(t)=2I(t)^2K'(0) \text{ and } \Theta'(t,\theta_0)=2I(t)K(0),
   \]
   where $K(0)$ and $K'(0)$ are given explicitly in Remark \eqref{rem:derv ker 0} thus 
   \[
   -\frac{2g}{t^2}=\frac{8g^2 K'(0)}{t^2} \text{ and } -\frac{\beta \mu}{t}=\frac{4g}{t}K(0),
   \]
   which finally gives
\[
-4gK'(0)=1 \text{ and }gK(0)=-\beta\mu.
\]
   One may similarly consider the case of $m$ branch spiral sheets as well. 
	
		\bibliographystyle{plain}
		\bibliography{log-spiral}

\begin{thebibliography}{10}

\bibitem{Abe}
Ken Abe.
\newblock Existence of homogeneous euler flows of degree $-\alpha \notin
  [-2,0]$.
\newblock {\em Archive for Rational Mechanics and Analysis}, 248, 2024.

\bibitem{Alexander71}
R.~C. Alexander.
\newblock Family of similarity flows with vortex sheets.
\newblock {\em The Physics of Fluids}, 14(2):231--239, 1971.

\bibitem{BedrossianMasmoudi}
Jacob Bedrossian and Nader Masmoudi.
\newblock Inviscid damping and the asymptotic stability of planar shear flows
  in the 2{D} {E}uler equations.
\newblock {\em Publications math{\'e}matiques de l'IH{\'E}S}, 122(1):195--300,
  2015.

\bibitem{Birkhoff}
G.~Birkhoff.
\newblock Helmholtz and taylor instability.
\newblock {\em Proceedings of Symposia in Applied Mathematics}, XIII:55--76,
  1962.

\bibitem{CKW22}
Tomasz Cie{\'s}lak, Piotr Kokocki, and Wojciech~S. O{\.z}a{\'n}ski.
\newblock Existence of nonsymmetric logarithmic spiral vortex sheet solutions
  to the 2{D} {E}uler equations.
\newblock {\em arXiv preprint arXiv:2207.06056}, 2022.

\bibitem{CKW24}
Tomasz Cie{\'s}lak, Piotr Kokocki, and Wojciech~S. O{\.z}a{\'n}ski.
\newblock Linear instability of symmetric logarithmic spiral vortex sheets.
\newblock {\em Journal of Mathematical Fluid Mechanics}, 26, 2024.

\bibitem{CKW21}
Tomasz Cie{\'s}lak, Piotr Kokocki, and Wojciech~S. O{\.z}a{\'n}ski.
\newblock Well-posedness of logarithmic spiral vortex sheets.
\newblock {\em Journal of Differential Equations}, 389:508--539, 2024.

\bibitem{ED-review}
Theodore~D Drivas and Tarek~M Elgindi.
\newblock Singularity formation in the incompressible euler equation in finite
  and infinite time.
\newblock {\em arXiv preprint arXiv:2203.17221}, 2022.

\bibitem{EMS}
Tarek Elgindi, Ryan Murray, and Ayman Said.
\newblock On the long-time behavior of scale-invariant solutions to the 2d
  euler equation and applications.
\newblock {\em arXiv preprint arXiv:2211.08418}, 2022.

\bibitem{EJSVP1}
Tarek~M. Elgindi and In-Jee Jeong.
\newblock On singular vortex patches, {I}: Well-posedness issues.
\newblock {\em Memoirs of the AMS, to appear, arXiv:1903.00833}.

\bibitem{EJ}
Tarek~M. Elgindi and In-Jee Jeong.
\newblock Ill-posedness for the {I}ncompressible {E}uler {E}quations in
  {C}ritical {S}obolev {S}paces.
\newblock {\em Ann. PDE}, 3(1):3:7, 2017.

\bibitem{EJ1}
Tarek~M. Elgindi and In-Jee Jeong.
\newblock Symmetries and critical phenomena in fluids.
\newblock {\em Comm. Pure Appl. Math.}, 73(2):257--316, 2020.

\bibitem{Elling-Gnann}
V~Elling and MV~Gnann.
\newblock Variety of unsymmetric multibranched logarithmic vortex spirals.
\newblock {\em European Journal of Applied Mathematics}, 30(1):23--38, 2019.

\bibitem{Spiral}
R.~M. Everson and K.~R. Sreenivasan.
\newblock Accumulation rates of spiral-like structures in fluid flows.
\newblock {\em Proceedings: Mathematical and Physical Sciences},
  437(1900):391--401, 1992.

\bibitem{GuWi2}
Julien Guillod and Peter Wittwer.
\newblock Asymptotic behaviour of solutions to the stationary {N}avier-{S}tokes
  equations in two-dimensional exterior domains with zero velocity at infinity.
\newblock {\em Math. Models Methods Appl. Sci.}, 25(2):229--253, 2015.

\bibitem{GuWi}
Julien Guillod and Peter Wittwer.
\newblock Generalized scale-invariant solutions to the two-dimensional
  stationary {N}avier-{S}tokes equations.
\newblock {\em SIAM J. Math. Anal.}, 47(1):955--968, 2015.

\bibitem{Hamel}
G.~Hamel.
\newblock Spiralf\"ormige bewegungen z\"aher fl\"ussigkeiten.
\newblock {\em J. Deutsch. Math.-Ver.}, 25:34--65, 1917.

\bibitem{IonescuJia1}
Alexandru Ionescu and Hao Jia.
\newblock Axi-symmetrization near point vortex solutions for the 2d euler
  equation.
\newblock {\em Communications on Pure and Applied Mathematics}, 75(4):818--891,
  2022.

\bibitem{IonescuJia2}
Alexandru Ionescu and Hao Jia.
\newblock Nonlinear inviscid damping near monotonic shear flows.
\newblock {\em Acta Mathematica}, 230:321--399, 2023.

\bibitem{Kam}
T.~Kambe.
\newblock Spiral vortex solution of birkhoff-rott equation.
\newblock {\em Physica D: Nonlinear Phenomena}, 37(1):463--473, 1989.

\bibitem{Shnirelman-turvey}
Boris Khesin, Gerard Misiolek, and Alexander Shnirelman.
\newblock Geometric hydrodynamics in open problems.
\newblock {\em arXiv preprint arXiv:2205.01143}, 2022.

\bibitem{Landau}
L.~Landau.
\newblock A new exact solution of {N}avier-{S}tokes equations.
\newblock {\em C. R. (Doklady) Acad. Sci. URSS (N.S.)}, 43:286--288, 1944.

\bibitem{MB}
Andrew~J. Majda and Andrea~L. Bertozzi.
\newblock {\em Vorticity and incompressible flow}, volume~27 of {\em Cambridge
  Texts in Applied Mathematics}.
\newblock Cambridge University Press, Cambridge, 2002.

\bibitem{MP}
Carlo Marchioro and Mario Pulvirenti.
\newblock {\em Mathematical theory of incompressible nonviscous fluids},
  volume~96 of {\em Applied Mathematical Sciences}.
\newblock Springer-Verlag, New York, 1994.

\bibitem{MasmoudiZhao}
Nader Masmoudi and Weiren Zhao.
\newblock Nonlinear inviscid damping for a class of monotone shear flows in
  finite channel.
\newblock {\em arXiv preprint arXiv:2001.08564}, 2020.

\bibitem{Prandtl61}
L.~Prandtl.
\newblock \"uber die entstehung von wirbeln in der idealen fl\"ussigkeit, mit
  anwendung auf die tragfl\"ugeltheorie und andere aufgaben.
\newblock {\em Vorträge aus dem Gebiete der Hydro- und Aerodynamik
  (Innsbruck)}, pages 18--33, 1922.

\bibitem{Pull}
D.~I. Pullin.
\newblock Vortex tubes, spirals, and large-eddy simulation of turbulence.
\newblock In {\em Tubes, sheets and singularities in fluid dynamics
  ({Z}akopane, 2001)}, volume~71 of {\em Fluid Mech. Appl.}, pages 171--180.
  Kluwer Acad. Publ., Dordrecht, 2002.

\bibitem{Rott}
Nicholas Rott.
\newblock Diffraction of a weak shock with vortex generation.
\newblock {\em Journal of Fluid Mechanics}, 1(1):111–128, 1956.

\bibitem{Saff}
P.~G. Saffman.
\newblock {\em Vortex Dynamics}.
\newblock Cambridge Monographs on Mechanics. Cambridge University Press, 1993.

\bibitem{Shnirelman}
Alexander Shnirelman.
\newblock On the long time behavior of fluid flows.
\newblock {\em Procedia IUTAM}, 7:151--160, 2013.

\bibitem{stein2003princeton}
Elias~M Stein and Rami Shakarchi.
\newblock {\em Princeton lectures in analysis}.
\newblock Princeton University Press Princeton, 2003.

\bibitem{Sverakcours}
Vladimir \v{S}ver\'ak.
\newblock Course notes.
\newblock 2011.

\bibitem{Sve}
Vladimir \v{S}ver\'{a}k.
\newblock On {L}andau's solutions of the {N}avier-{S}tokes equations.
\newblock volume 179, pages 208--228. 2011.
\newblock Problems in mathematical analysis. No. 61.

\bibitem{Y1}
V.~I. Yudovich.
\newblock Non-stationary flows of an ideal incompressible fluid.
\newblock {\em Z. Vycisl. Mat. i Mat. Fiz.}, 3:1032--1066, 1963.

\end{thebibliography}
		
	\end{document}